\documentclass[11pt]{article}
\usepackage{amsmath,amsfonts,amsthm,amssymb,mathtools,enumerate}
\usepackage{mathrsfs,graphicx,color,latexsym, tikz, calc}
\usepackage{hyperref}
\usetikzlibrary{shadows}
\usepackage{graphicx,bm} 
\usepackage{float}

\newtheorem{theorem}{Theorem}
\newtheorem{lemma}{Lemma}

\newtheorem{conjecture}{Conjecture}

\title{\bf \Large{The Second Largest Eigenvalue of Stiffness Matrices of Normalized Complete Frameworks}}

\author{
{\small Tingting Wang,\ \ Lu Lu\footnote{Corresponding author.
\newline{\it \hspace*{5mm}Email addresses:} wttcsu@163.com (T. Wang),  lulugdmath@163.com (L. Lu).}}\\[2mm]
\footnotesize  School of Mathematics and Statistics, Central South University\\ 
\footnotesize Changsha, Hunan, 410083, China}
\date{}
\begin{document}

\maketitle
\begin{abstract}
Let $R(G,p)$ be the normalized rigidity matrix of a framework $(G,p)$
in $\mathbb R^d$, and let
\[
L(G,p)=R(G,p)R(G,p)^{T}
\]
be the associated stiffness matrix. We study the extremal eigenvalues
of $L(K_n,p)$ for complete frameworks whose vertices lie on the unit
sphere and have centroid at the origin.

Our main result shows that, whenever $d\ge2$ and the image of $p$
contains at least three distinct points, the second largest eigenvalue
of $L(K_n,p)$ is exactly $n/2$. This settles the eigenvalue part of a
conjecture of Lew et al. [Israel J. Math. 256, 2023]. We further construct
an infinite family of examples, given by regular polygons embedded in a
two-dimensional subspace, for which the eigenvalue $n/2$ has multiplicity
$2n-4$. Consequently, the multiplicity predicted in the conjecture is not
correct in general. Our results reveal a dichotomy: the value of the
second largest eigenvalue is universal, while its multiplicity is
sensitive to the geometry of the underlying point configuration.\\[1mm]

\noindent {\it AMS classification:} 05C50, 52C25, 15A18 \\[1mm]
\noindent {\it Keywords}: Rigidity matrix; Stiffness matrix; Second largest eigenvalue; Complete graph. 

\end{abstract}
\section{Introduction}

Rigidity theory is concerned with the extent to which a geometric framework is determined by the lengths of its edges. Since the pioneering work of Asimow and Roth~\cite{AR1,AR2}, rigidity theory has developed into an active research area lying at the intersection of discrete geometry, graph theory, algebraic geometry and optimization. Besides its intrinsic mathematical interest, rigidity theory has found numerous applications in structural engineering, robotics, sensor network localization, computer-aided design, molecular biology and materials science. We refer the reader to the monograph of Graver et al.~\cite{GSS}, the surveys of Connelly~\cite{Connelly} and Whiteley~\cite{Whiteley}, and the references therein for comprehensive introductions to the subject.

A fundamental problem in rigidity theory is to determine whether a framework is rigid, either locally or globally, under prescribed edge-length constraints. From an algebraic viewpoint, the local aspect of this question is naturally translated into the study of the rigidity matrix: for a given framework, the kernel of this matrix consists of its infinitesimal motions, and for generic frameworks its rank determines infinitesimal, equivalently local, rigidity. Consequently, the rigidity matrix has become one of the principal algebraic objects in modern rigidity theory, playing an important role in both combinatorial and geometric investigations.

More recently, increasing attention has been devoted to the spectral properties of matrices arising from rigidity theory. Among them, the stiffness matrix, obtained as the Gramian of the normalized rigidity matrix, provides a natural analogue of the graph Laplacian in the geometric setting. As a positive semidefinite matrix, its eigenvalues reflect subtle interactions between the combinatorial structure of the underlying graph and the geometry of the point configuration. In particular, the smallest nonzero eigenvalues give rise to higher-dimensional analogues of algebraic connectivity and provide quantitative measures of rigidity. Understanding these eigenvalues is therefore of independent interest, and has led to recent developments concerning rigidity thresholds for random subgraphs, higher-dimensional algebraic connectivity, and stress-matrix methods in global rigidity; see, for example,~\cite{Connelly,JT,LNPE} and the references therein.

To investigate spectral properties of frameworks, Jord\'an and Tanigawa~\cite{JT} 
introduced a normalized version of the rigidity matrix. Let \((G,p)\) be a 
\(d\)-dimensional framework, where \(G=(V,E)\) is a simple graph and
\[
p:V\rightarrow\mathbb R^d
\]
assigns a point in \(\mathbb R^d\) to each vertex. For every edge \(uv\in E\), define
\[
d_{uv}=
\begin{cases}
\dfrac{p(u)-p(v)}{\|p(u)-p(v)\|}, & p(u)\neq p(v),\\[6pt]
0,& p(u)=p(v),
\end{cases}
\]
and let
\[
\mathbf v_{uv}
=(1_u-1_v)\otimes d_{uv},
\]
where \(\| \cdot\|\) denotes the Euclidean norm in \(\mathbb R^{d}\), 
\(\{1_u\}_{u\in V}\) denotes the standard basis of \(\mathbb R^{|V|}\), 
and \(\otimes\) denotes the Kronecker product. The \emph{normalized rigidity matrix} 
of \((G,p)\) is defined by
\[
R(G,p)=\left(\mathbf v_{uv}\right)_{uv\in E},
\]
whose columns are indexed by the edges of \(G\). The associated \emph{stiffness matrix} is
\[
L(G,p)=R(G,p)R(G,p)^T,
\]
which is a symmetric positive semidefinite matrix of order \(d|V|\).

Stiffness matrices also arise naturally beyond rigidity theory, for instance in the stability 
analysis of truss structures in structural engineering~\cite{CG,CP}. Within rigidity theory, 
they have appeared in certain sufficient conditions for the rigidity and stability of 
frameworks~\cite{CW,CTG}. Unlike the classical rigidity matrix, whose rank is the primary 
object of study in infinitesimal rigidity, the stiffness matrix naturally gives rise to a rich 
spectral theory. Since
\(L(G,p)=R(G,p)R(G,p)^T,
\)
its nonzero eigenvalues coincide with the squares of the singular values of the normalized 
rigidity matrix. Consequently, the spectrum of \(L(G,p)\) reflects not only the infinitesimal 
rigidity of the framework but also the geometry of the underlying point configuration. 
This spectral viewpoint has recently attracted considerable attention and naturally leads to 
extremal eigenvalue problems for stiffness matrices, including questions on eigenvalue bounds, 
multiplicities, and geometric characterizations of frameworks~\cite{JT,LNPE}.

In this paper, we specialize to frameworks of the form \((K_n,p)\), where \(K_n\) denotes 
the complete graph on \(n\) vertices. Throughout the paper, we identify the vertex set of the complete graph $K_n$ with the set $[n]=\{1,2,\ldots,n\}$. Owing to the high degree of symmetry of \(K_n\), the associated stiffness matrix 
exhibits particularly structured spectral properties.
 In particular, 
Jord\'an and Tanigawa proved that its largest eigenvalue has a prescribed value independent 
of both the ambient dimension and the point configuration~\cite{JT}. 

\begin{lemma}[{\cite[Lemma~4.3]{JT}}]\label{lem1-1}
Let $p:[n]\rightarrow\mathbb R^d$ be non-constant.
Then the largest eigenvalue of $L(K_n,p)$ is $n$, and
\[
(p(1),\ldots,p(n))^T
\]
is an eigenvector corresponding to the eigenvalue $n$.
\end{lemma}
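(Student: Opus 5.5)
The plan is to bound the Rayleigh quotient of $L(K_n,p)$ above by $n$, and then to exhibit an explicit vector attaining this bound. We write $x=(x_1,\ldots,x_n)^T\in\mathbb R^{dn}$ with blocks $x_u\in\mathbb R^d$.

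\textbf{Step 1: the upper bound.} Since $L(K_n,p)=\sum_{uv}\mathbf v_{uv}\mathbf v_{uv}^T$ and $\mathbf v_{uv}^Tx=\langle d_{uv},x_u-x_v\rangle$, we have
\[
x^TL(K_n,p)x=\sum_{u<v}\langle d_{uv},x_u-x_v\rangle^2 .
\]
Each $d_{uv}$ has norm $1$ or $0$, so Cauchy--Schwarz bounds every term by $\|x_u-x_v\|^2$. The standard identity
\[
\sum_{u<v}\|x_u-x_v\|^2=n\sum_u\|x_u\|^2-\Big\|\sum_u x_u\Big\|^2\le n\|x\|^2
\]
then gives $x^TLx\le n\|x\|^2$. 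Hence every eigenvalue of $L(K_n,p)$ is at most $n$.

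\textbf{Step 2: a vector attaining $n$.} Fix $u$ and compute
\[
(LP)_u=\sum_{v\ne u} d_{uv}d_{uv}^T\big(p(u)-p(v)\big).
\]
If $p(v)\ne p(u)$, then $d_{uv}$ is the unit vector along $p(u)-p(v)$, so $d_{uv}d_{uv}^T(p(u)-p(v))=p(u)-p(v)$. If $p(v)=p(u)$, then both $d_{uv}$ and $p(u)-p(v)$ vanish. In either case the summand equals $p(u)-p(v)$, and so
\[
(LP)_u=np(u)-c,\qquad c=\sum_v p(v).
\]
That is, $LP=nP-\mathbf 1\otimes c$.

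Translations lie in the kernel, since $\mathbf v_{uv}^T(\mathbf 1\otimes w)=0$ for every $w\in\mathbb R^d$. Put $Q=P-\mathbf 1\otimes(c/n)$, the centered configuration. Then $LQ=LP=nP-\mathbf 1\otimes c=nQ$. Because $p$ is non-constant, $Q\ne 0$, so $n$ is an eigenvalue. Combined with Step 1, it is the largest eigenvalue.

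\textbf{The one delicate point} is the claim that $(p(1),\ldots,p(n))^T$ itself is the eigenvector. By the computation above this holds exactly when $c=0$, i.e.\ when the centroid is at the origin, which is the normalization adopted in this paper. In general the correct eigenvector is the centered configuration $Q$. I would state this explicitly. Otherwise the argument is a direct computation, and the only thing to check carefully is the convention for coincident points: for those pairs both $d_{uv}$ and $p(u)-p(v)$ vanish, so they contribute nothing to $LP$ and do not spoil the identity.
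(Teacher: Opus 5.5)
Your proof is correct and complete. The paper gives no proof of this statement: it quotes the lemma from Jord\'an--Tanigawa and uses it as a black box, so there is no in-paper argument to compare against. Your proof is the natural self-contained one. It rests on the same two identities the paper later records in Lemma~\ref{lem:basic-formulas} (the quadratic form and the componentwise formula for $L\mathbf x$) and Lemma~\ref{lem:pairwise-distance}.

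Your caveat about the eigenvector is right and worth stating, and it can be sharpened. Write $\mathbf p=(p(1),\ldots,p(n))^T$ and $c=\sum_i p(i)$, so that $L\mathbf p=n\mathbf p-\mathbf 1\otimes c$. Suppose $L\mathbf p=\lambda\mathbf p$ for some $\lambda$. Then $(n-\lambda)\mathbf p=\mathbf 1\otimes c$. If $\lambda=n$ this forces $c=0$; otherwise it forces $p$ to be constant. Hence, for non-constant $p$, the vector $\mathbf p$ is an eigenvector of $L(K_n,p)$ at all if and only if $\sum_i p(i)=0$.

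A concrete instance: take $n=2$, $d=1$, $p(1)=0$, $p(2)=1$. Then $L=\begin{pmatrix}1&-1\\-1&1\end{pmatrix}$ and $L(0,1)^T=(-1,1)^T$, so $(0,1)^T$ is not an eigenvector.

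So the lemma as quoted is slightly inaccurate without a centering hypothesis. This does not affect the paper. The lemma is only used in the proof of Theorem~\ref{thm1-1}, where $\sum_i p_i=0$ is assumed. There $\mathbf p$ genuinely is the eigenvector for $n$ that is fed into Lemma~\ref{lem:spectral-criterion}.
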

 Lemma~\ref{lem1-1} completely determines the largest eigenvalue of the stiffness matrix of a complete framework. It is therefore natural to ask whether other extremal eigenvalues also admit explicit descriptions under suitable geometric assumptions. In particular, the second largest eigenvalue is of special interest from both algebraic and geometric perspectives. From the viewpoint of matrix analysis, it measures the spectral gap of the stiffness matrix and provides finer spectral information beyond the Perron eigenvalue. From the viewpoint of rigidity theory, its value and multiplicity reflect additional geometric constraints imposed by the underlying point configuration. Consequently, understanding the second largest eigenvalue has become a natural continuation of the work of Jord\'an and Tanigawa.

Motivated by this question, Lew et al.~\cite{LNPE} studied frameworks $(K_n,p)$ whose vertices lie on the unit sphere and satisfy $\sum_{i=1}^n p(i)=0$. Under these hypotheses, they proved the following lower bound on the multiplicity of the eigenvalue $n/2$.

\begin{lemma}[{\cite[Proposition~4.2]{LNPE}}]\label{lem1-2}
Let $p:[n]\rightarrow\mathbb R^d$ satisfy
\[
\|p(i)\|=1,\qquad
\sum_{i=1}^{n}p(i)=0,
\]
and assume that the image of $p$ contains at least three distinct points.
Then $n/2$ is an eigenvalue of $L(K_n,p)$ with multiplicity at least $n-1$.
\end{lemma}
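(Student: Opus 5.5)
The plan is to write down an explicit $(n-1)$-dimensional space of eigenvectors for the eigenvalue $n/2$. By Lemma~\ref{lem1-1} the configuration vector $(p(1),\ldots,p(n))^T$ itself is an eigenvector for $n$. This suggests trying vectors of the form $x=(x_1,\ldots,x_n)$ with blocks $x_i=a_ip(i)+w$, where $a\in\mathbb R^n$ and $w\in\mathbb R^d$ is a constant vector. Write $p_i=p(i)$ for short.

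\textbf{Step 1: a local formula for $L(K_n,p)$.} Since $L=\sum_{ij}\mathbf v_{ij}\mathbf v_{ij}^T$, the $i$-th block of $Lx$ is
\[
(Lx)_i=\sum_{j\neq i} d_{ij}\,d_{ij}^T(x_i-x_j).
\]
A constant block $w$ is killed by every $d_{ij}^T(x_i-x_j)$, so $w$ contributes nothing. Terms with $p_j=p_i$ vanish because $d_{ij}=0$.

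For $p_j\neq p_i$, put $c=\langle p_i,p_j\rangle$. Using $\|p_i\|=\|p_j\|=1$ we have $\|p_i-p_j\|^2=2-2c$, and
\[
d_{ij}^T(a_ip_i-a_jp_j)=\frac{(a_i+a_j)(1-c)}{\|p_i-p_j\|}=\frac{(a_i+a_j)\|p_i-p_j\|}{2}.
\]
Hence the $j$-th term equals $\tfrac12(a_i+a_j)(p_i-p_j)$. This expression also vanishes when $p_j=p_i$, so we may sum it over all $j$.

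\textbf{Step 2: the eigen-equation.} Summing over $j$ and using the centroid hypothesis $\sum_j p_j=0$ gives
\[
(Lx)_i=\tfrac12\Bigl(n a_ip_i+S\,p_i-\sum_j a_jp_j\Bigr),\qquad S=\sum_j a_j.
\]
Now restrict to $a$ with $S=0$ and choose $w=-\frac1n\sum_j a_jp_j$. Then $(Lx)_i=\frac n2\bigl(a_ip_i+w\bigr)=\frac n2x_i$. So every $a$ in the hyperplane $\{S=0\}$, which has dimension $n-1$, yields a solution of $Lx=\frac n2x$.

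\textbf{Step 3: injectivity.} The map $a\mapsto x$ is linear, so it remains to show its kernel is trivial; this is where the three-points hypothesis enters. Suppose $x=0$, so that $a_ip_i=-w$ for all $i$.
\begin{itemize}
\item If $w=0$, then $a_i=0$ for all $i$ because $p_i\neq0$.
\item If $w\neq0$, then every $a_i\neq0$ and each $p_i=\pm w/\|w\|$ by the unit-norm condition. The image of $p$ would then have at most two points, a contradiction.
\end{itemize}
Thus the eigenspace of $n/2$ contains an $(n-1)$-dimensional subspace, as claimed. (Sanity check: $a=\mathbf 1$, which has $S=n$, recovers the eigenvector $p$ for eigenvalue $n$.)

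The main obstacle is guessing the right ansatz, namely the correction term $w$. The naive choice $x_i=a_ip_i$ alone forces the extra constraint $\sum_j a_jp_j=0$, which loses $d$ dimensions. The key observation is that adding a translation is free, since it lies in the kernel of $L$, and it absorbs exactly the unwanted term.
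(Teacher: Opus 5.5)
Your proof is correct and complete. The identity $d_{ij}d_{ij}^T(a_ip_i-a_jp_j)=\tfrac12(a_i+a_j)(p_i-p_j)$ holds, including when $p_i=p_j$. The resulting formula $(L\mathbf x)_i=\tfrac12\bigl(na_ip_i+Sp_i-\sum_j a_jp_j\bigr)$, the choice $w=-\frac1n\sum_j a_jp_j$ on the hyperplane $S=0$, and the injectivity argument all check out. Since $L$ is symmetric, the $(n-1)$-dimensional eigenspace you produce also bounds the algebraic multiplicity.

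The paper itself gives no proof of this statement. It is imported from \cite{LNPE} and used only at the end of the proof of Theorem~\ref{thm1-1}, to know that $n/2$ is actually attained. So there is no in-paper argument to compare with. Your construction makes Theorem~\ref{thm1-1} self-contained, using only Lemma~\ref{lem:basic-formulas} and $\|p_i-p_j\|^2=2(1-c_{ij})$.

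It also fits the paper's upper bound. Your vectors satisfy $\sum_i\langle x_i,p_i\rangle=S=0$ and $\sum_i x_i=\sum_i a_ip_i+nw=0$. Hence both correction terms in Lemma~\ref{lem:key-estimate} vanish, and your eigenvectors are exactly equality cases of that estimate.

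For the regular $n$-gon of Section~4, the complex images $x_k=a_kq_k+\Phi(w)$ of your vectors satisfy $\sum_k x_k=0$ and $\sum_k q_k\overline{x_k}=\sum_k a_k=0$. So they lie in $W_4$, which has dimension $2n-4>n-1$ for $n\ge4$. Your family therefore need not exhaust the eigenspace, consistent with Theorem~\ref{thm1-2}.

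Finally, your use of the three-point hypothesis is sharp. If $p_i=\pm e$ with equally many of each sign, take $a_i=\pm1$ with the matching sign. Then $S=0$, $w=-e$ and $\mathbf x=0$, so the injectivity in Step~3 genuinely fails without that hypothesis.
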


This result shows that the eigenvalue $n/2$ plays a distinguished role in the spectrum of the stiffness matrix under the above normalization assumptions. It is therefore natural to ask whether this lower bound on the multiplicity is sharp, and whether $n/2$ also admits an extremal spectral interpretation. Motivated by computational evidence and further theoretical considerations, Lew et al.~\cite{LNPE} proposed the following conjecture.

\begin{conjecture}[\cite{LNPE}]\label{conj1-1}
    Let \( d \geq 3 \), and let \( p: [n] \to \mathbb{R}^d \) such that \( \|p(i)\| = 1 \) for all \( i \in [n] \) and \( \sum_{i=1}^n p(i) = 0 \). Assume that the image of \( p \) is of size at least \(3\). Then, the second largest eigenvalue of \( L(K_n, p) \) is \( n/2 \), and its multiplicity is exactly \( n-1 \).
\end{conjecture}

Conjecture~\ref{conj1-1} naturally gives rise to two closely related questions: one concerning the value of the second largest eigenvalue, and the other concerning its multiplicity. Although these two assertions appear to be closely intertwined, our results show that they have rather different natures. More precisely, we confirm the assertion on the second largest eigenvalue, while demonstrating that the asserted multiplicity does not hold in general. Our first main result determines the second largest eigenvalue exactly, thereby settling the first part of Conjecture~\ref{conj1-1}.

\begin{theorem}\label{thm1-1}
    For \(d\ge 2\) and let \( p: [n] \to \mathbb{R}^d \) such that \( \|p(i)\| = 1 \) for all \( i \in [n] \) and \( \sum_{i=1}^n p(i) = 0 \). Assume that the image of \( p \) is of size at least \(3\). Then the second largest eigenvalue of \( L(K_n, p) \) is \( n/2 \).
\end{theorem}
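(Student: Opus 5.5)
The plan is to show two things about the spectrum of $L=L(K_n,p)$: that $n/2$ occurs as an eigenvalue, and that on the orthogonal complement of the eigenvector $P=(p(1),\ldots,p(n))^T$ of Lemma~\ref{lem1-1} every eigenvalue is at most $n/2$. Together with Lemma~\ref{lem1-1}, this says that the spectrum consists of $n$ (with $P$ as an eigenvector) and then eigenvalues $\le n/2$. Lemma~\ref{lem1-2} supplies the eigenvalue $n/2$, with multiplicity $n-1\ge 2$, so $n/2$ is genuinely the second largest eigenvalue. The bound on $P^\perp$ also shows that $n$ is simple, since any other $n$-eigenvector could be projected into $P^\perp$ and would contradict the bound there. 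So everything reduces to an upper bound on the quadratic form restricted to $P^\perp$.

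To get that bound, I would write $x=(x_1,\ldots,x_n)\in\mathbb R^{dn}$ and use
\[
x^TLx=\sum_{i<j}\langle x_i-x_j,d_{ij}\rangle^2 .
\]
Since $LP=nP$ and $\|P\|^2=n$, the target can be encoded as the matrix inequality
\[
T:=\tfrac n2 I-L+\tfrac12 PP^T\succeq 0 .
\]
On $P$ the left-hand side is $\tfrac n2 n-n\cdot n+\tfrac12 n^2=0$, which is consistent with $P$ being an eigenvector for $n$. On $P^\perp$ the inequality is exactly $L\preceq \tfrac n2 I$. In coordinates I need to prove
\[
\sum_{i<j}\langle x_i-x_j,d_{ij}\rangle^2\;\le\;\frac n2\sum_i\|x_i\|^2+\frac12\Bigl(\sum_i\langle p_i,x_i\rangle\Bigr)^2 .
\]

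The tools I would use are the unit-sphere identities $\|p_i-p_j\|^2=2-2\langle p_i,p_j\rangle$ and $\sum_i p_i=0$. The latter gives
\[
\sum_{i<j}\|p_i-p_j\|^2=n^2 .
\]
This suggests expanding each summand as
\[
\langle x_i-x_j,d_{ij}\rangle^2=\frac{\langle x_i-x_j,p_i-p_j\rangle^2}{2(1-\langle p_i,p_j\rangle)} .
\]
I would then try to bound each pair term by a local expression of the form
\[
\tfrac12\bigl(\|x_i\|^2+\|x_j\|^2\bigr)+(\text{cross terms in }\langle p_i,x_i\rangle,\langle p_j,x_j\rangle)
\]
minus a nonnegative square. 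For a fixed pair this is a two-point, essentially planar computation in $\operatorname{span}(p_i,p_j)$ together with its complement, where Cauchy--Schwarz should work. Summing over pairs, the diagonal parts contribute $\frac{n-1}{2}\|x\|^2$, and the remaining $\frac12\|x\|^2$ and the cross terms must be absorbed using $\sum_i p_i=0$ to form $\frac12(\sum_i\langle p_i,x_i\rangle)^2$.

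The main obstacle is finding the right sum-of-squares certificate for $T\succeq0$, since the naive bound $\langle x_i-x_j,d_{ij}\rangle^2\le\|x_i-x_j\|^2$ loses too much. My first attempt would be to decompose $x_i=\alpha_ip_i+y_i$ with $y_i\perp p_i$, which separates radial and tangential parts, and treat the two parts separately. The radial part should reduce to a scalar matrix inequality on $(\alpha_i)$ governed by the Gram matrix $(\langle p_i,p_j\rangle)$. The tangential part should reduce to the inequality $\langle y_i-y_j,d_{ij}\rangle^2\le\tfrac12(\|y_i\|^2+\|y_j\|^2)$-type bounds, obtained from
\[
|\langle y_i,p_j\rangle|\le\sqrt{1-\langle p_i,p_j\rangle^2}\,\|y_i\| .
\]
Failing that, I would verify $T\succeq0$ via its block structure, with diagonal blocks $\tfrac n2 I-\sum_j d_{ij}d_{ij}^T+\tfrac12 p_ip_i^T$, and prove diagonal dominance in the Loewner order.
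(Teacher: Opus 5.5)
\textbf{There is a genuine gap: the tangential estimate, which is the whole difficulty, is missing, and each route you propose for it fails.}

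Your reduction is sound and coincides with the paper's. It suffices to bound the form by $\frac n2\|x\|^2$ on $P^\perp$. Your target is a weakening of Lemma~\ref{lem:key-estimate}, which carries an extra $-\frac12\|\sum_i x_i\|^2$. The split $x_i=a_ip_i+y_i$ with $y_i\perp p_i$ is exactly the paper's, and the radial part is a routine Gram-matrix computation.

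First, the pairwise bound $\langle y_i-y_j,d_{ij}\rangle^2\le\frac12(\|y_i\|^2+\|y_j\|^2)$ is false. Take $p_i=(\cos\theta,\sin\theta)$, $p_j=(\cos\theta,-\sin\theta)$, $y_i=(-\sin\theta,\cos\theta)$, $y_j=(-\sin\theta,-\cos\theta)$, padded by zeros. Then $d_{ij}=(0,1)$ and $\langle y_i-y_j,d_{ij}\rangle^2=4\cos^2\theta$, against $1$. Your Cauchy--Schwarz input gives precisely $\langle y_i-y_j,d_{ij}\rangle^2\le\frac{1+c_{ij}}2(\|y_i\|+\|y_j\|)^2$, with equality attainable. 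So any pairwise bound $w_{ij}\|y_i\|^2+w_{ji}\|y_j\|^2$ (minus squares) needs $w_{ij}+w_{ji}\ge2(1+c_{ij})$. For distinct points every pair imposes this, and $\sum_{j\ne i}(1+c_{ij})=n-2$. Hence the constant obtained is at least $n-2>\frac n2$ once $n\ge5$. Cross terms in the $a_i$ cannot help, since they vanish for tangential $x$.

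Second, the radial and tangential parts cannot be treated separately. With $Q=\sum_ia_ip_i$ and $Y=\sum_iy_i$,
\[
x^TLx=\frac n2\sum_ia_i^2+\frac12\Bigl(\sum_ia_i\Bigr)^2-\frac12\|Q\|^2-\langle Y,Q\rangle+\sum_{i<j}\langle y_i-y_j,d_{ij}\rangle^2 .
\]
Here $Q$ can be any vector of $\mathrm{span}\{p_i\}$, even when $\sum_ia_i=0$. Taking $Q=-\Pi Y$, with $\Pi$ the orthogonal projection onto that span, shows your target is equivalent to $\sum_{i<j}\langle y_i-y_j,d_{ij}\rangle^2\le\frac n2\sum_i\|y_i\|^2-\frac12\|\Pi Y\|^2$ for all tangential families. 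Nothing in your plan produces that correction term.

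Third, block diagonal dominance of $T$ fails already for $d=2$ and distinct points. The $d\times d$ blocks satisfy $\operatorname{tr}T_{ii}=\frac32$, so $\lambda_{\min}(T_{ii})\le\frac34$. Meanwhile $d_{ij}^TT_{ij}d_{ij}=1-\frac{1-c_{ij}}4\ge\frac12$ for every $j\ne i$.

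The missing idea is the global estimate of Lemma~\ref{lem:tangent-estimate}:
\[
\sum_{i<j}\langle y_i-y_j,d_{ij}\rangle^2\le\frac12\sum_{i<j}\|y_i-y_j\|^2=\frac n2\sum_i\|y_i\|^2-\frac12\|Y\|^2 .
\]
Its correction term combines with $-\frac12\|Q\|^2-\langle Y,Q\rangle$ into $-\frac12\|Q+Y\|^2$. The paper proves it in two steps.

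\begin{enumerate}
\item It lifts $y_i$ to $B_i=\frac1{\sqrt2}(p_i\otimes y_i-y_i\otimes p_i)$, so $\|B_i\|=\|y_i\|$. Cauchy--Schwarz against $A_{ij}=\frac1{\sqrt2}(p_i\otimes p_j-p_j\otimes p_i)$ gives $\langle y_i-y_j,d_{ij}\rangle^2\le\frac{1+c_{ij}}2\|B_i-B_j\|^2$. This keeps the cross terms $\langle B_i,B_j\rangle$. The $c_{ij}$-weighted part sums to $-\|\sum_ip_i\otimes B_i\|^2\le0$ because $\sum_jc_{ij}=0$. What remains is $\frac12\sum_{i<j}\|B_i-B_j\|^2\le\frac n2\sum_i\|y_i\|^2$.
\item To gain the $-\frac12\|Y\|^2$, it first replaces $y_i$ by $y_i-(u-\langle u,p_i\rangle p_i)$, where $u=M^{-1}Y$ and $M=nI-\sum_ip_ip_i^T$. $M$ is positive definite precisely because the image of $p$ has at least three points. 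This shift preserves $y_i\perp p_i$ and the defect $\frac12\sum_{i<j}\|y_i-y_j\|^2-\sum_{i<j}\langle y_i-y_j,d_{ij}\rangle^2$. It also makes the family centered, for which $\frac12\sum_{i<j}\|y_i-y_j\|^2=\frac n2\sum_i\|y_i\|^2$.
\end{enumerate}

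Without these two ingredients or a substitute, your proposal reduces the theorem to its hard step but does not prove it.
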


The proof relies on a careful analysis of the quadratic form associated with the stiffness matrix together with the variational characterization of eigenvalues. The argument is independent of the ambient dimension and applies to arbitrary point configurations satisfying the normalization conditions.

Our second result demonstrates that the multiplicity statement in Conjecture~\ref{conj1-1} does not hold in general. More precisely, we construct an infinite family of normalized frameworks whose second largest eigenvalue is still equal to $n/2$, but whose multiplicity is strictly larger than the conjectured value.

\begin{theorem}\label{thm1-2}
Let $d\ge2$, $n\ge3$, and let $H$ be a two-dimensional subspace of $\mathbb R^d$ with orthonormal basis $\{u,v\}$. Define
\[
p(k)=
\cos\frac{2(k-1)\pi}{n}\,u+
\sin\frac{2(k-1)\pi}{n}\,v,
\text{ for }
1\le k\le n.
\]
Then the spectrum of $L(K_n,p)$ is given by
\[
\operatorname{Spec}(L(K_n,p))
=
\left\{
n^{(1)},
\left(\frac n2\right)^{(2n-4)},
0^{(n(d-2)+3)}
\right\},
\]
where superscripts denote algebraic multiplicities.
\end{theorem}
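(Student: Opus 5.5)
Since $p$ takes values in $H$ and $d_{ij}\in H$, the first step is to split $\mathbb R^{dn}=(\mathbb R^n\otimes H)\oplus(\mathbb R^n\otimes H^\perp)$. Every column $\mathbf v_{ij}=(1_i-1_j)\otimes d_{ij}$ lies in $\mathbb R^n\otimes H$, so $L(K_n,p)$ vanishes on the $n(d-2)$-dimensional space $\mathbb R^n\otimes H^\perp$. It therefore suffices to compute the spectrum of the $2n\times 2n$ restriction $M$ of $L(K_n,p)$ to $\mathbb R^n\otimes H$, written in the basis $\{u,v\}$. We then need to show that $M$ has spectrum $\{n^{(1)},(n/2)^{(2n-4)},0^{(3)}\}$. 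The value $n$ with an eigenvector comes from Lemma~\ref{lem1-1}. The points are distinct unit vectors with centroid zero, so Lemma~\ref{lem1-2} gives that $n/2$ has multiplicity at least $n-1$. That lemma is not needed, however, once we compute everything directly.

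The plan is to use the cyclic symmetry. The rotation by $2\pi/n$ in $H$, combined with the cyclic shift $k\mapsto k+1$, is an automorphism of the framework, so $M$ commutes with the corresponding orthogonal map. We complexify $H$ via $z=x+iy$, i.e.\ use the basis $e_\pm=(u\mp iv)/\sqrt2$. The direction $d_{jk}$ is a unit vector perpendicular to the bisector of the chord: $d_{jk}=\pm i\,e^{i(\theta_j+\theta_k)/2}$ with $\theta_k=2(k-1)\pi/n$. Hence $d_{jk}d_{jk}^T$ acts on a complex coordinate $z$ by $z\mapsto \tfrac12(z-\omega^{j+k-2}\bar z)$ with $\omega=e^{2\pi i/n}$, up to sign conventions that must be fixed carefully. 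This gives an explicit formula for the quadratic form
\[
x^TMx=\sum_{j<k}\langle x_j-x_k,d_{jk}\rangle^2 .
\]
Expanding it in Fourier modes $x_k=\sum_m (a_m\omega^{m(k-1)}+\ldots)$, the operator couples the mode $z_k=\omega^{mk}$ with $\bar z$-modes of frequency $-m$ shifted by one, i.e.\ $\omega^{(1-m)k}$-type terms. The form thus decomposes into invariant blocks of size at most $2$, pairing frequency $m$ with frequency $1-m$ roughly, at least when these frequencies differ mod $n$.

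The next step is to compute each block. Using $\sum_k \omega^{rk}=n[r\equiv0]$ and the degree identity $\sum_{k\ne j}\tfrac12=\tfrac{n-1}2$, each diagonal entry should be $n/2$ minus a correction that vanishes except at the special frequencies $m\equiv 0,1$. The off-diagonal entries should vanish except for the pairs involving the trivial motions. One then expects to find: the translations (two-dimensional) and the rotation lying in the kernel, giving $0^{(3)}$; the dilation $x_k=p(k)$ giving eigenvalue $n$; and all remaining $2n-4$ modes giving eigenvalue exactly $n/2$. As a consistency check, the trace of $M$ equals $\sum_{j<k}2\|d_{jk}\|^2=n(n-1)$, and indeed $n+(2n-4)\tfrac n2=n(n-1)$. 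This, together with the already known eigenvalues $n$ (Lemma~\ref{lem1-1}) and $0^{(3)}$ (trivial motions spanning a 3-dimensional kernel since the points span $H$), reduces the problem to showing that $M$ has only the eigenvalue $n/2$ on the orthogonal complement $W$ of those four vectors.

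For that last claim there is a cleaner alternative to the full block computation: show that $(M-\tfrac n2 I)^2=0$ on $W$, or equivalently compute $\operatorname{tr}(M^2)=n^2+(2n-4)n^2/4$. Since $M$ is symmetric, the trace condition together with the trace of $M$ forces, by Cauchy--Schwarz, all eigenvalues on $W$ to equal $n/2$. Computing $\operatorname{tr}(M^2)=\sum_{e,f}\langle \mathbf v_e,\mathbf v_f\rangle^2$ requires sums of $\cos^2$ of angles between chord directions, which become trigonometric sums over roots of unity. \textbf{This sum is the main obstacle.} I would first attempt it via the Fourier block decomposition, where it reduces to evaluating geometric sums $\sum_k\omega^{rk}$, and fall back on the explicit $\operatorname{tr}(M^2)$ computation only if the block bookkeeping becomes unwieldy.
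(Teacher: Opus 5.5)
Your setup coincides with the paper's. You split off $(H^\perp)^n\subseteq\ker L$, and your formula $z\mapsto\frac12\bigl(z-\omega^{j+k-2}\bar z\bigr)$ is exactly Lemma~\ref{lem:projection-complex}. There is in fact no sign convention to fix, since only $w^2=-q_jq_k$ enters. The gap is that the proposal stops exactly where the proof has to happen. That every eigenvalue on the $(2n-4)$-dimensional complement $W$ equals $n/2$ is only predicted (``should be'', ``one then expects''), and you leave the needed sum as ``the main obstacle''. Neither the Fourier route nor the trace route is carried out, so as written the theorem is not proved. Your block heuristic is also slightly off: a priori the conjugation couples frequency $m$ with $2-m$, not $1-m$. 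The $\overline{x_j}$ and $\overline{x_k}$ pieces each produce a frequency-$(2-m)$ term, and these cancel.

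The missing step is one line. Sum your projection formula over $k\ne j$, using $\sum_{k\ne j}\omega^{k-1}(\overline{x_j}-\overline{x_k})=-\sum_{k}\omega^{k-1}\overline{x_k}$ (because $\sum_k\omega^{k-1}=0$). This gives
\[
(\mathcal Lx)_j=\frac n2\,x_j-\frac12\sum_{k}x_k+\frac12\,\omega^{j-1}\sum_{k}\omega^{k-1}\overline{x_k},
\]
which is Lemma~\ref{lem:regular-L-formula}. It shows immediately, with no block bookkeeping, that $\mathcal L=\frac n2\,\mathrm{Id}$ on the real subspace where both sums vanish, $\mathcal L=0$ on constants and on $\mathrm i\mathbf q$, and $\mathcal L\mathbf q=n\mathbf q$.

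Your trace route also closes easily, and would be a genuinely different finish. In $\operatorname{tr}(M^2)=\sum_{e,f}\langle\mathbf v_e,\mathbf v_f\rangle^2$, diagonal terms give $4\binom n2=2n(n-1)$ and disjoint edges give $0$. For adjacent edges, $w_{ij}^2=-q_iq_j$ yields $\langle d_{ij},d_{ik}\rangle^2=\frac12\bigl(1+\operatorname{Re}(q_j\overline{q_k})\bigr)$. Moreover $\sum_{j\ne k;\,j,k\ne i}q_j\overline{q_k}=\bigl|\sum_{j\ne i}q_j\bigr|^2-(n-1)=2-n$. Hence
\[
\operatorname{tr}(M^2)=2n(n-1)+\frac n2\bigl((n-1)(n-2)+2-n\bigr)=\frac{n^3}{2}=n^2+(2n-4)\frac{n^2}{4},
\]
and your Cauchy--Schwarz equality argument then forces all eigenvalues on $W$ to equal $n/2$. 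That version needs only $\operatorname{tr}M$, $\operatorname{tr}M^2$, Lemma~\ref{lem1-1} and the three trivial motions. Unlike the paper's closed form, however, it does not identify the $n/2$-eigenspace explicitly as $W_4$.
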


Theorem~\ref{thm1-2} shows that the multiplicity of the eigenvalue $n/2$ depends delicately on the geometry of the underlying point configuration and cannot, in general, be determined solely by the normalization conditions. Consequently, Conjecture~\ref{conj1-1} is only partially true: while the predicted value of the second largest eigenvalue is correct, the proposed multiplicity formula fails even for highly symmetric point configurations. In this sense, our results reveal an unexpected dichotomy between the location of the second largest eigenvalue and its multiplicity.

The remainder of the paper is organized as follows. In Section~2, we establish several auxiliary results concerning the stiffness matrix and develop the spectral tools needed in the subsequent proofs. Section~3 is devoted to the proof of Theorem~\ref{thm1-1}. In Section~4, we determine the complete spectrum of the stiffness matrix associated with regular polygons and prove Theorem~\ref{thm1-2}. We conclude the paper with several remarks and possible directions for further research.
\section{Preliminaries}

In this section we collect several elementary facts that will be used
throughout the paper. We write $p_i:=p(i)$ for $i\in[n]$,
and, when no confusion is possible, we write $R=R(K_n,p)$ and $L=L(K_n,p)$. For a vector
\[
\mathbf{x}=(x_1,\ldots,x_n)^T\in(\mathbb R^d)^n,
\]
we regard each component \(x_i\) as a vector in \(\mathbb R^d\).

\begin{lemma}\label{lem:basic-formulas}
Let \(p:[n]\to\mathbb R^d\), and let \(L=L(K_n,p)\). Then, for every
\(\mathbf{x}=(x_1,\ldots,x_n)^T\in(\mathbb R^d)^n\), we have
\[
(R^T\mathbf{x})_{ij}
=
\langle x_i-x_j,d_{ij}\rangle,
\qquad 1\le i<j\le n,
\]
and hence
\[
\mathbf{x}^TL\mathbf{x}
=
\sum_{1\le i<j\le n}
\langle x_i-x_j,d_{ij}\rangle^2.
\]
Moreover, the \(i\)-th component of \(L\mathbf{x}\) is given by
\[
(L\mathbf{x})_i
=
\sum_{j\ne i}
d_{ij}d_{ij}^T(x_i-x_j).
\]
\end{lemma}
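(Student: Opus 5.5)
The statement is Lemma~\ref{lem:basic-formulas}, which is a direct computation from the definitions, so the plan is simply to unwind the Kronecker-product structure of the columns of $R$ in three steps.

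\textbf{Step 1: the entries of $R^T\mathbf{x}$.} The column of $R$ indexed by the edge $ij$ (with $i<j$) is $\mathbf v_{ij}=(1_i-1_j)\otimes d_{ij}$. Viewed as an element of $(\mathbb R^d)^n$, its $i$-th block is $d_{ij}$, its $j$-th block is $-d_{ij}$, and every other block is zero. Hence the $ij$ entry of $R^T\mathbf{x}$ is
\[
\langle \mathbf v_{ij},\mathbf{x}\rangle=\langle d_{ij},x_i\rangle-\langle d_{ij},x_j\rangle=\langle x_i-x_j,d_{ij}\rangle .
\]
When $p_i=p_j$ we have $d_{ij}=0$, and both sides vanish, so this degenerate case needs no separate treatment.

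\textbf{Step 2: the quadratic form.} Since $\mathbf{x}^TL\mathbf{x}=\mathbf{x}^TRR^T\mathbf{x}=\|R^T\mathbf{x}\|^2$, summing the squares of the entries from Step~1 over all edges $ij$ with $i<j$ gives the stated formula for $\mathbf{x}^TL\mathbf{x}$.

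\textbf{Step 3: the blocks of $L\mathbf{x}$.} Write $L\mathbf{x}=R(R^T\mathbf{x})=\sum_{i<j}\langle x_i-x_j,d_{ij}\rangle\,\mathbf v_{ij}$. The $i$-th block of this sum receives contributions only from edges containing $i$.
\begin{itemize}
\item An edge $ij$ with $j>i$ contributes $\langle x_i-x_j,d_{ij}\rangle d_{ij}$.
\item An edge $ji$ with $j<i$ contributes $-\langle x_j-x_i,d_{ji}\rangle d_{ji}$.
\end{itemize}
Use the antisymmetry $d_{ji}=-d_{ij}$, which holds directly from the definition and also in the case $p_i=p_j$ where both are zero. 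With it, the second contribution becomes $\langle x_i-x_j,d_{ij}\rangle d_{ij}$ as well, since the two sign changes cancel. Finally, rewrite $\langle x_i-x_j,d_{ij}\rangle d_{ij}=d_{ij}d_{ij}^T(x_i-x_j)$ and sum over $j\neq i$ to obtain the stated formula for $(L\mathbf{x})_i$.

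There is no real obstacle here. The only point requiring care is the sign bookkeeping for the two orientations of an edge in Step~3, and the antisymmetry of $d_{ij}$ resolves it.
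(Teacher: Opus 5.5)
Your proof is correct and is the paper's argument: read $(R^T\mathbf{x})_{ij}$ off the block structure of $(1_i-1_j)\otimes d_{ij}$, write $\mathbf{x}^TL\mathbf{x}=\|R^T\mathbf{x}\|^2$, and expand $L\mathbf{x}=\sum_{i<j}\mathbf v_{ij}\mathbf v_{ij}^T\mathbf{x}$ blockwise. Your sign bookkeeping for the two orientations of an edge, using $d_{ji}=-d_{ij}$, supplies the step the paper leaves implicit.
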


\begin{proof}
The first identity follows directly from the definition of the
normalized rigidity matrix, since the column corresponding to the edge
\(ij\) is \((1_i-1_j)\otimes d_{ij}\). Therefore
\[
(R^T\mathbf{x})_{ij}
=
\langle (1_i-1_j)\otimes d_{ij},\mathbf{x}\rangle
=
\langle x_i-x_j,d_{ij}\rangle.
\]
Since \(L=RR^T\), we obtain
\[
\mathbf{x}^TL\mathbf{x}
=
\mathbf{x}^TRR^T\mathbf{x}
=
\|R^T\mathbf{x}\|^2
=
\sum_{1\le i<j\le n}
\langle x_i-x_j,d_{ij}\rangle^2.
\]
The componentwise formula follows by expanding
\[
L=\sum_{1\le i<j\le n}
\bigl((1_i-1_j)\otimes d_{ij}\bigr)
\bigl((1_i-1_j)\otimes d_{ij}\bigr)^T.
\]
This gives
\[
(L\mathbf{x})_i
=
\sum_{j\ne i}
d_{ij}d_{ij}^T(x_i-x_j),
\]
as claimed.
\end{proof}

\begin{lemma}\label{lem:centered-identities}
Assume that $\|p_i\|=1$ for all $i\in[n]$ and $\sum_{i=1}^n p_i=0$.
Let $c_{ij}:=\langle p_i,p_j\rangle$. Then for every \(i\in[n]\), we have 
\[
\sum_{j=1}^n c_{ij}=0,
\text{ and }
\sum_{j\ne i}(1-c_{ij})=n.
\]
Moreover, for all \(i,j\in[n]\), we have
\[
\|p_i-p_j\|^2=2(1-c_{ij}).
\]
\end{lemma}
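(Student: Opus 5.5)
The statement to prove is Lemma~\ref{lem:centered-identities}, and everything follows by expanding inner products, using only the hypotheses $\|p_i\|=1$ and $\sum_j p_j=0$. The plan is to prove the three identities in the order: first sum, norm identity, second sum, since the second sum uses the other two facts.

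\textbf{First identity.} For the row sum, fix $i$ and use bilinearity of the inner product:
\[
\sum_{j=1}^n c_{ij}=\sum_{j=1}^n\langle p_i,p_j\rangle=\Bigl\langle p_i,\sum_{j=1}^n p_j\Bigr\rangle=\langle p_i,0\rangle=0 .
\]

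\textbf{Norm identity.} Expand the square and use $\|p_i\|=\|p_j\|=1$:
\[
\|p_i-p_j\|^2=\|p_i\|^2-2\langle p_i,p_j\rangle+\|p_j\|^2=2-2c_{ij}=2(1-c_{ij}).
\]
This holds for all $i,j$, including $i=j$, where both sides vanish.

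\textbf{Second identity.} Note that the diagonal term is $c_{ii}=\|p_i\|^2=1$. Separating it from the first identity gives
\[
\sum_{j\ne i}c_{ij}=0-c_{ii}=-1 .
\]
There are $n-1$ indices $j\ne i$, so
\[
\sum_{j\ne i}(1-c_{ij})=(n-1)-\sum_{j\ne i}c_{ij}=(n-1)+1=n .
\]

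There is no real obstacle here. The only point needing care is to treat the diagonal term $c_{ii}=1$ separately when passing from the full sum over $j$ to the sum over $j\neq i$; this is exactly what turns $n-1$ into $n$.
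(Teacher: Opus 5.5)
Your proof is correct and matches the paper's argument essentially step for step: bilinearity with $\sum_j p_j=0$ for the row sum, separating the diagonal term $c_{ii}=1$ to obtain $\sum_{j\ne i}(1-c_{ij})=n$, and expanding the square for $\|p_i-p_j\|^2=2(1-c_{ij})$. One small remark: the second identity needs only the first identity and $c_{ii}=1$, not the norm identity, so your stated reason for the ordering is slightly overstated, though this affects nothing.
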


\begin{proof}
For each fixed \(i\), we have
\[
\sum_{j=1}^n c_{ij}
=
\sum_{j=1}^n\langle p_i,p_j\rangle
=
\left\langle p_i,\sum_{j=1}^n p_j\right\rangle
=
0.
\]
Since \(c_{ii}=\|p_i\|^2=1\), it follows that
\[
\sum_{j\ne i}(1-c_{ij})
=
(n-1)-\sum_{j\ne i}c_{ij}
=
(n-1)-(-1)
=
n.
\]
Finally, we get
\[
\|p_i-p_j\|^2
=
\|p_i\|^2+\|p_j\|^2-2\langle p_i,p_j\rangle
=
2(1-c_{ij}).
\]
\end{proof}

\begin{lemma}\label{lem:pairwise-distance}
Let \(u_1,\ldots,u_n\) be vectors in a real inner product space. Then
\[
\sum_{1\le i<j\le n}\|u_i-u_j\|^2
=
n\sum_{i=1}^n\|u_i\|^2
-
\left\|\sum_{i=1}^n u_i\right\|^2.
\]
Equivalently,
\[
\frac12\sum_{1\le i<j\le n}\|u_i-u_j\|^2
=
\frac n2\sum_{i=1}^n\|u_i\|^2
-
\frac12
\left\|\sum_{i=1}^n u_i\right\|^2.
\]
\end{lemma}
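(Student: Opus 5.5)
The plan is to prove the first identity by expanding the squared norms and summing; the second identity then follows by dividing by two. The vectors $u_1,\ldots,u_n$ live in a general real inner product space, so I will use only bilinearity and symmetry of the inner product.

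\textbf{Step 1 (symmetrize).} The summand $\|u_i-u_j\|^2$ is symmetric in $(i,j)$ and vanishes when $i=j$. Hence
\[
\sum_{1\le i<j\le n}\|u_i-u_j\|^2=\frac12\sum_{i=1}^n\sum_{j=1}^n\|u_i-u_j\|^2 .
\]

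\textbf{Step 2 (expand).} Write $\|u_i-u_j\|^2=\|u_i\|^2+\|u_j\|^2-2\langle u_i,u_j\rangle$ and sum over all ordered pairs $(i,j)$. The first two terms each contribute $n\sum_{i}\|u_i\|^2$. By bilinearity, the cross term contributes
\[
-2\Big\langle \sum_i u_i,\sum_j u_j\Big\rangle=-2\Big\|\sum_i u_i\Big\|^2 .
\]
The double sum is therefore $2n\sum_i\|u_i\|^2-2\|\sum_i u_i\|^2$.

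\textbf{Step 3 (halve).} Taking half of this, as Step 1 requires, gives the first identity. Dividing both sides by two once more gives the equivalent second form.

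There is no real obstacle. The only point needing care is the bookkeeping between ordered and unordered pairs, which accounts for the two factors of $\tfrac12$. One could instead argue by induction on $n$, but the direct expansion is cleaner.
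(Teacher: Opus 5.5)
Your proposal is correct and follows essentially the same route as the paper: rewrite the sum over $i<j$ as half the sum over all ordered pairs, expand $\|u_i-u_j\|^2$, and recognize the cross terms as $\bigl\|\sum_i u_i\bigr\|^2$. Your bookkeeping of the factors of $\tfrac12$ is right, and the second form follows by dividing by two as you say.
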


\begin{proof}
Expanding the left-hand side gives
\[
\sum_{1\le i<j\le n}\|u_i-u_j\|^2
=
\frac12\sum_{i,j=1}^n
\left(
\|u_i\|^2+\|u_j\|^2-2\langle u_i,u_j\rangle
\right).
\]
Hence
\[
\sum_{1\le i<j\le n}\|u_i-u_j\|^2
=
n\sum_{i=1}^n\|u_i\|^2
-
\sum_{i,j=1}^n\langle u_i,u_j\rangle
=
n\sum_{i=1}^n\|u_i\|^2
-
\left\|\sum_{i=1}^n u_i\right\|^2.
\]
This proves the identity.
\end{proof}

\begin{lemma}\label{lem:positive-M}
Assume that $\|p_i\|=1$ for all $i\in[n]$, and that the image of \(p\) contains at least three distinct points.
Then the matrix
\[
M:=nI-\sum_{i=1}^n p_ip_i^T
\]
is positive definite.
\end{lemma}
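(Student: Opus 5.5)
Since $M=nI-\sum_{i=1}^n p_ip_i^T$ is symmetric, it suffices to show that for every unit vector $x\in\mathbb R^d$,
\[
x^TMx \;=\; n-\sum_{i=1}^n\langle p_i,x\rangle^2 \;>\;0 .
\]
The proof therefore reduces to bounding each term $\langle p_i,x\rangle^2$.

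By Cauchy--Schwarz and $\|p_i\|=1$, each $\langle p_i,x\rangle^2\le \|p_i\|^2\|x\|^2=1$. Hence $\sum_i\langle p_i,x\rangle^2\le n$, so $M$ is at least positive semidefinite. Equality in the sum would force $\langle p_i,x\rangle^2=1$ for every $i$. Since $p_i$ and $x$ are unit vectors, the equality case of Cauchy--Schwarz then gives $p_i=\pm x$ for all $i$.

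In that case the image of $p$ would be contained in $\{x,-x\}$, which has at most two points. This contradicts the assumption that the image of $p$ has at least three distinct points. So the inequality is strict for every unit $x$, and $M$ is positive definite.

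The only step needing care is the equality case of Cauchy--Schwarz, and it is routine. The hypothesis $\sum_i p_i=0$ is not needed for this lemma.
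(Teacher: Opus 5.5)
Your proof is correct and is essentially the paper's argument: the quadratic form $x^TMx=\sum_i\bigl(\|x\|^2-\langle x,p_i\rangle^2\bigr)$ is nonnegative by Cauchy--Schwarz, and equality would force every $p_i=\pm x/\|x\|$, contradicting the three-point hypothesis. Restricting to unit $x$ is only a cosmetic difference, and you are right that $\sum_i p_i=0$ is not needed here.
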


\begin{proof}
For any \(w\in\mathbb R^d\), we have
\[
w^TMw
=
\sum_{i=1}^n
\left(
\|w\|^2-\langle w,p_i\rangle^2
\right)
\ge 0
\]
by the Cauchy--Schwarz inequality. Hence \(M\) is positive semidefinite.

Suppose that \(w^TMw=0\) for some nonzero \(w\). Then equality must hold
in each term, so every \(p_i\) is parallel to \(w\). Since \(\|p_i\|=1\),
each \(p_i\) must be equal to either \(w/\|w\|\) or \(-w/\|w\|\). Thus
the image of \(p\) contains at most two distinct points, contradicting
the assumption. Therefore \(M\) is positive definite.
\end{proof}

\begin{lemma}\label{lem:spectral-criterion}
Let \(A\) be a real symmetric matrix, and suppose that \(v\neq 0\) is
an eigenvector of \(A\) corresponding to the eigenvalue \(\lambda_1\).
Assume that
\[
x^TAx\le \alpha\|x\|^2
\]
for every \(x\perp v\), where \(\alpha<\lambda_1\). Then every eigenvalue
of \(A\) other than \(\lambda_1\) is at most \(\alpha\). In particular,
\(\lambda_1\) is simple. In addition,  if \(\alpha\) is an eigenvalue of
\(A\), then the second largest eigenvalue of \(A\) is \(\alpha\).
\end{lemma}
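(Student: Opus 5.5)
The plan is to use the spectral theorem to diagonalize $A$ and then reduce everything to a single Rayleigh-quotient computation on an eigenvector orthogonal to $v$. Write $A=\sum_k \mu_k w_kw_k^T$ with $\{w_k\}$ an orthonormal eigenbasis of $\mathbb R^N$. We are free to choose this basis so that $v/\|v\|$ is one of its members. Indeed, $v$ lies in the eigenspace $E_{\lambda_1}$, and we can complete $v/\|v\|$ to an orthonormal basis of $E_{\lambda_1}$ and then adjoin orthonormal bases of the other eigenspaces. Eigenspaces for distinct eigenvalues of a symmetric matrix are orthogonal, so the result is an orthonormal eigenbasis of the whole space. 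The point of this choice is that every other basis vector $w$ is orthogonal to $v$.

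The main step is as follows. Let $w$ be any basis vector other than $v/\|v\|$, with eigenvalue $\mu$. Since $w\perp v$, the hypothesis applies to $w$ and gives
\[
\mu=\mu\|w\|^2=w^TAw\le\alpha\|w\|^2=\alpha .
\]
The spectrum of $A$, counted with multiplicity, is exactly the list of eigenvalues attached to the basis vectors. Hence every eigenvalue except the single copy of $\lambda_1$ carried by $v/\|v\|$ is at most $\alpha$.

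Since $\alpha<\lambda_1$, no other basis vector can have eigenvalue $\lambda_1$. So $\dim E_{\lambda_1}=1$, that is, $\lambda_1$ is simple, and every eigenvalue different from $\lambda_1$ is at most $\alpha$. Finally, suppose $\alpha$ is itself an eigenvalue. It differs from $\lambda_1$, every other eigenvalue is at most $\alpha$, and $\lambda_1>\alpha$ is the largest eigenvalue. Therefore $\alpha$ is the second largest eigenvalue.

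I expect no step to be a real obstacle. The only point requiring care is the basis selection, which guarantees that the test vectors are genuinely orthogonal to $v$. If $E_{\lambda_1}$ is not known in advance to be one-dimensional, an arbitrary eigenbasis might not contain a multiple of $v$. Choosing the basis inside $E_{\lambda_1}$ to start from $v/\|v\|$ handles this, and the argument above then shows a posteriori that this eigenspace is one-dimensional.
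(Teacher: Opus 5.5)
Your proof is correct and follows essentially the same route as the paper: you diagonalize $A$ by the spectral theorem and apply the hypothesis $x^TAx\le\alpha\|x\|^2$ to eigenvectors orthogonal to $v$. Choosing the orthonormal eigenbasis to contain $v/\|v\|$ also gives a rigorous proof of simplicity, which the paper asserts only briefly with ``no other linearly independent eigenvector can correspond to $\lambda_1$.''
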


\begin{proof}
Since \(A\) is symmetric, there exists an orthonormal basis consisting
of eigenvectors of \(A\). Every eigenvector corresponding to an
eigenvalue different from \(\lambda_1\) is orthogonal to \(v\). Hence, if
\(x\perp v\) is such an eigenvector with eigenvalue \(\lambda\), then
\[
\lambda\|x\|^2=x^TAx\le \alpha\|x\|^2,
\]
so \(\lambda\le\alpha\). Since \(\alpha<\lambda_1\), no other
linearly independent eigenvector can correspond to \(\lambda_1\), and
therefore \(\lambda_1\) is simple. If \(\alpha\) itself is an eigenvalue,
then it is necessarily the second largest eigenvalue.
\end{proof}

\section{A quadratic-form estimate and proof of Theorem~\ref{thm1-1}}

Throughout this section, we assume that
$\|p_i\|=1$, $i\in[n]$ for all $\sum_{i=1}^n p_i=0$, 
and that the image of \(p\) contains at least three distinct points.
We write $p_i:=p(i)$, $R:=R(K_n,p)$ and $L:=L(K_n,p)$.

We first prove an auxiliary estimate for vectors that orthogonal with $p_i$.

\begin{lemma}\label{lem:tangent-estimate}
Let \(y_1,\ldots,y_n\in\mathbb R^d\) satisfy
$\langle y_i,p_i\rangle=0$ for all $i\in[n]$.
Then we have
\[
\sum_{1\le i<j\le n}
\langle y_i-y_j,d_{ij}\rangle^2
\le
\frac12
\sum_{1\le i<j\le n}
\|y_i-y_j\|^2 .
\]
\end{lemma}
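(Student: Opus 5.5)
The plan is to work pair by pair, using that for unit vectors $p_i,p_j$ the directions $e_{ij}:=p_i-p_j$ and $f_{ij}:=p_i+p_j$ are orthogonal. Then we sum over pairs and use the centering identities of Lemma~\ref{lem:centered-identities}. Set $a_{ij}:=\langle y_i,p_j\rangle$ for $i\neq j$. Since $\langle y_i,p_i\rangle=0$, expanding gives
\[
\langle y_i-y_j,p_i-p_j\rangle=-(a_{ij}+a_{ji}),\qquad
\langle y_i-y_j,p_i+p_j\rangle=a_{ij}-a_{ji}.
\]
Together with $\|p_i-p_j\|^2=2(1-c_{ij})$ and $\|p_i+p_j\|^2=2(1+c_{ij})$, the left-hand side of the inequality is
\[
\sum_{i<j}\frac{(a_{ij}+a_{ji})^2}{2(1-c_{ij})}.
\]
For the right-hand side, Lemma~\ref{lem:pairwise-distance} gives $\frac n2\sum_i\|y_i\|^2-\frac12\|\sum_i y_i\|^2$. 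Alternatively, projecting $y_i-y_j$ onto the orthogonal directions $d_{ij}$ and $f_{ij}/\|f_{ij}\|$ gives the lower bound
\[
\|y_i-y_j\|^2\ge \frac{(a_{ij}+a_{ji})^2}{2(1-c_{ij})}+\frac{(a_{ij}-a_{ji})^2}{2(1+c_{ij})}.
\]

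The next step is to reduce to an inequality in the scalars $a_{ij}$ and $\|y_i\|$, and to bring in the centering hypothesis. Two consequences of $\sum_j p_j=0$ are available. First, $\sum_{j\ne i}a_{ij}=\langle y_i,\sum_j p_j\rangle-\langle y_i,p_i\rangle=0$ for each $i$. Second, $\sum_{j\ne i}(1+c_{ij})=n-2$ and $\sum_{j\ne i}(1-c_{ij})=n$. A first crude bound uses $|a_{ij}|=|\langle y_i,p_j-p_i\rangle|\le\|y_i\|\sqrt{1-c_{ij}^2}$, which holds because $y_i$ lies in the tangent space at $p_i$. With $(a+b)^2\le 2(a^2+b^2)$ this gives $\mathrm{LHS}\le\sum_i\|y_i\|^2\sum_{j\ne i}(1+c_{ij})=(n-2)\sum_i\|y_i\|^2$. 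That bound is too weak: it is off by a factor of about $2$ and ignores the $\|\sum y_i\|^2$ term. So the real argument must keep the cross terms $2a_{ij}a_{ji}$ and the identity $\sum_j a_{ij}=0$.

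The refined step is to prove
\[
\sum_{i<j}\Bigl[\|y_i-y_j\|^2-\frac{(a_{ij}+a_{ji})^2}{1-c_{ij}}\Bigr]\ge 0.
\]
To do this, I would expand $\|y_i-y_j\|^2=\|y_i\|^2+\|y_j\|^2-2\langle y_i,y_j\rangle$ and write each $y_i$ in terms of its components along the tangent projections of the $p_j$. Then I would try to show that the resulting quadratic form in $(y_1,\dots,y_n)$ is a sum of squares, using $\sum_j a_{ij}=0$ and $\sum_j c_{ij}=0$ to absorb the $\langle y_i,y_j\rangle$ cross terms. A natural candidate to test first is to compare the two projection terms: one shows that, after summation, $\sum_{i<j}\frac{(a_{ij}+a_{ji})^2}{2(1-c_{ij})}$ is at most $\sum_{i<j}\frac{(a_{ij}-a_{ji})^2}{2(1+c_{ij})}$ plus the part of $\|y_i-y_j\|^2$ orthogonal to both $e_{ij}$ and $f_{ij}$. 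By the projection lower bound, that would give the claimed factor $\frac12$.

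The main obstacle is this global summation step. Pairwise the inequality is false, since $y_i-y_j$ can be parallel to $d_{ij}$, so the centering condition has to be used in an essential, non-local way. I expect the decisive input to be the row-sum identities $\sum_j a_{ij}=0$ and $\sum_j c_{ij}=0$, and most of the effort will go into finding the right grouping of terms into squares.
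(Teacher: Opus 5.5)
There is a genuine gap: the proposal never proves the key summation step.

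Your setup is correct. This includes the formula $\sum_{i<j}(a_{ij}+a_{ji})^2/\bigl(2(1-c_{ij})\bigr)$ for the left-hand side, the identities $\sum_{j\ne i}a_{ij}=0$ and $\sum_{j\ne i}(1+c_{ij})=n-2$, and your diagnosis of the crude bound. That bound loses a factor of $2$ by splitting $(a_{ij}+a_{ji})^2\le 2(a_{ij}^2+a_{ji}^2)$, and it ignores $\|\sum_i y_i\|^2$. But the proposal stops exactly where the proof has to begin:
\begin{itemize}
\item Your ``refined step'' $\sum_{i<j}[\|y_i-y_j\|^2-(a_{ij}+a_{ji})^2/(1-c_{ij})]\ge 0$ is just the lemma multiplied by $2$.
\item Your ``natural candidate'' is also a rewording of the lemma. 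It bounds the summed $e_{ij}$-components by the summed $f_{ij}$-components plus the remainder orthogonal to both. But $\|y_i-y_j\|^2$ is exactly the sum of those three pieces, so the candidate says $2\sum(\text{$e_{ij}$-part})\le\sum\|y_i-y_j\|^2$.
\item ``Find the right grouping into squares'' is not an argument. Every positive semidefinite form is a sum of squares, and producing one here is the whole content of the lemma.
\end{itemize}
So no mechanism for the global summation is given.

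The paper uses two ideas you are missing.

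First, it removes the $\|\sum_i y_i\|^2$ term by a reduction. The defect $D(\mathbf y)=\frac12\sum_{i<j}\|y_i-y_j\|^2-\sum_{i<j}\langle y_i-y_j,d_{ij}\rangle^2$ is unchanged when each $y_i$ is replaced by $y_i-(u-\langle u,p_i\rangle p_i)$, for any fixed $u$. The cross term vanishes because $\sum_{j\ne i}\langle y_i,p_j\rangle=0$, and $D$ of the shift equals $-\frac12(\sum_i\langle u,p_i\rangle)^2=0$. Choosing $u=M^{-1}\sum_i y_i$ with $M=nI-\sum_i p_ip_i^T$, which is invertible by Lemma~\ref{lem:positive-M}, makes the new family centered.

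Second, instead of bounding $a_{ij}$ and $a_{ji}$ separately, it lifts each tangent vector to the antisymmetric tensor $B_i=\frac1{\sqrt2}(p_i\otimes y_i-y_i\otimes p_i)$, which satisfies $\|B_i\|=\|y_i\|$. Set $A_{ij}=\frac1{\sqrt2}(p_i\otimes p_j-p_j\otimes p_i)$. Then $\langle B_i-B_j,A_{ij}\rangle=a_{ij}+a_{ji}$ and $\|A_{ij}\|^2=1-c_{ij}^2$. Cauchy--Schwarz therefore gives the pairwise bound $\langle y_i-y_j,d_{ij}\rangle^2\le\frac{1+c_{ij}}2\|B_i-B_j\|^2$.

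Unlike any pairwise bound in terms of $\|y_i-y_j\|$, this one sums correctly. Because $(c_{ij})$ has zero row sums, $\sum_{i<j}c_{ij}\|B_i-B_j\|^2=-\|\sum_i p_i\otimes B_i\|^2\le 0$. Then $\frac12\sum_{i<j}\|B_i-B_j\|^2\le\frac n2\sum_i\|y_i\|^2$, and this equals the right-hand side once $\sum_i y_i=0$.

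This is the ``keep the cross terms'' device you were looking for. Expanding $\|B_i-B_j\|^2$ retains $\langle B_i,B_j\rangle$, and the centering of $p$ enters non-locally through the positive semidefiniteness of the matrix $\bigl(c_{ij}\langle B_i,B_j\rangle\bigr)$.
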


\begin{proof}
For \(\mathbf y=(y_1,\ldots,y_n)^T\in(\mathbb R^d)^n\), define
\[
D(\mathbf y):=
\frac12\sum_{1\le i<j\le n}\|y_i-y_j\|^2
-
\sum_{1\le i<j\le n}\langle y_i-y_j,d_{ij}\rangle^2 .
\]It is equivalent to prove that \(D(\mathbf y)\ge0\).

We first reduce to the case $\sum_{i=1}^n y_i=0$. Let
$Y:=\sum_{i=1}^n y_i$ and $M:=nI-\sum_{i=1}^n p_ip_i^T$. By Lemma~\ref{lem:positive-M}, the matrix \(M\) is positive definite.
Set
\[
u:=M^{-1}Y,
~
t_i:=u-\langle u,p_i\rangle p_i,
\text{ and }
y_i':=y_i-t_i .
\]
Then $\langle t_i,p_i\rangle=0$, $\langle y_i',p_i\rangle=0$,
and
\[
\sum_{i=1}^n y_i'
=
Y-\sum_{i=1}^n
\bigl(u-\langle u,p_i\rangle p_i\bigr)
=
Y-Mu
=
0.
\]

We claim that $D(\mathbf y')=D(\mathbf y)$. For simplicity, we write $a_i:=\langle u,p_i\rangle$.
Then $t_i=u-a_ip_i$ and $t_i-t_j=-a_ip_i+a_jp_j$.
By direct calculations, we get
\[
D(\mathbf y-\mathbf t)=D(\mathbf y)+D(\mathbf t)-E(\mathbf y,\mathbf t),
\]
where $E(\mathbf y,\mathbf t)
=
\sum_{i<j}\langle y_i-y_j,t_i-t_j\rangle
-
2\sum_{i<j}
\langle y_i-y_j,d_{ij}\rangle
\langle t_i-t_j,d_{ij}\rangle$.

We first show that \(E(\mathbf y,\mathbf t)=0\). For each pair \(i<j\), a direct
calculation gives
\[
\langle y_i-y_j,t_i-t_j\rangle
-
2\langle y_i-y_j,d_{ij}\rangle
 \langle t_i-t_j,d_{ij}\rangle=
-a_i\langle y_i,p_j\rangle
-a_j\langle y_j,p_i\rangle .
\]
Indeed, if \(p_i=p_j\), then \(a_i=a_j\), \(t_i=t_j\), and both sides
are equal to zero. If \(p_i\ne p_j\), the identity follows from
$d_{ij}=\frac{p_i-p_j}{\|p_i-p_j\|}$, $\|p_i-p_j\|^2=2(1-\langle p_i,p_j\rangle)$, 
together with $\langle y_i,p_i\rangle=\langle y_j,p_j\rangle=0$.
Summing over all pairs, we obtain
\[
E(\mathbf y,\mathbf t)
=
-\sum_{i=1}^n
a_i
\sum_{j\ne i}\langle y_i,p_j\rangle .
\]
Since \(\sum_{j=1}^n p_j=0\), we have $\sum_{j\ne i}\langle y_i,p_j\rangle
=
\left\langle y_i,-p_i\right\rangle
=
0$. Thus \(E(\mathbf y,\mathbf t)=0\).

Next we prove \(D(\mathbf t)=0\). For every pair \(i<j\), using
\(c_{ij}:=\langle p_i,p_j\rangle\), we have
\[
\frac12\|t_i-t_j\|^2
-
\langle t_i-t_j,d_{ij}\rangle^2
=
\frac12\left(c_{ij}(a_i^2+a_j^2)-2a_ia_j\right).
\]
This identity is also valid when \(p_i=p_j\), since then \(a_i=a_j\)
and both sides are zero. Therefore, we have
\[
D(\mathbf t)=\frac12
\sum_{i<j}
\left(c_{ij}(a_i^2+a_j^2)-2a_ia_j\right)=
\frac12
\sum_{i=1}^n a_i^2\sum_{j\ne i}c_{ij}
-
\sum_{i<j}a_ia_j .
\]
By Lemma~\ref{lem:centered-identities}, we have $\sum_{j\ne i}c_{ij}=-1$. Hence,
\[
D(\mathbf t)
=
-\frac12\sum_{i=1}^n a_i^2-\sum_{i<j}a_ia_j
=
-\frac12\left(\sum_{i=1}^n a_i\right)^2.
\]
Since $\sum_{i=1}^n a_i
=
\sum_{i=1}^n\langle u,p_i\rangle
=
\left\langle u,\sum_{i=1}^n p_i\right\rangle
=
0$, we get \(D(\mathbf t)=0\), and consequently
\(
D(\mathbf y')=D(\mathbf y).
\)

It remains to prove \(D(\mathbf y')\ge0\). Since \(\sum_i y_i'=0\), it suffices
to work with the centered family \(y_1',\ldots,y_n'\). For each \(i\),
define$
B_i:=
\frac1{\sqrt2}
\bigl(p_i\otimes y_i'-y_i'\otimes p_i\bigr),
$
where \(\mathbb R^d\otimes\mathbb R^d\) is endowed with the Frobenius
inner product. Since \(\|p_i\|=1\) and
\(\langle y_i',p_i\rangle=0\), we have $\|B_i\|=\|y_i'\|$.

We claim that
\(
\langle y_i'-y_j',d_{ij}\rangle^2
\le
\frac{1+c_{ij}}2\|B_i-B_j\|^2\) for all \(i<j\).
If \(p_i=p_j\), then \(d_{ij}=0\), and the inequality is trivial.
Assume \(p_i\ne p_j\). Let
$
A_{ij}:=
\frac1{\sqrt2}
\bigl(p_i\otimes p_j-p_j\otimes p_i\bigr).
$
Then, we get $\|A_{ij}\|^2=1-c_{ij}^2$ and a direct computation gives
\[
\langle B_i-B_j,A_{ij}\rangle
=
\langle y_i',p_j\rangle+\langle y_j',p_i\rangle .
\]
Moreover, we have $\langle y_i'-y_j',d_{ij}\rangle^2
=
\frac{\left(\langle y_i',p_j\rangle+\langle y_j',p_i\rangle\right)^2
}{2(1-c_{ij})}$.
By the Cauchy--Schwarz inequality, we get
\[
\langle y_i'-y_j',d_{ij}\rangle^2
=\frac{\langle B_i-B_j,A_{ij}\rangle^2}{2(1-c_{ij})}\le
\frac{\|B_i-B_j\|^2\|A_{ij}\|^2}{2(1-c_{ij})}=
\frac{1+c_{ij}}2\|B_i-B_j\|^2.
\]

Summing this inequality over all pairs yields
\[
\sum_{i<j}\langle y_i'-y_j',d_{ij}\rangle^2
\le
\frac12\sum_{i<j}(1+c_{ij})\|B_i-B_j\|^2.
\]
We now use the centeredness of \(p_1,\ldots,p_n\). Since
$
\sum_{j=1}^n c_{ij}=0
$
for every \(i\),  and $
\sum_{i=1}^n c_{ij}=0
$ for every \(j\), we have
\[
\begin{aligned}
\sum_{i<j}c_{ij}\|B_i-B_j\|^2
&=
\frac12\sum_{i,j}c_{ij}\|B_i-B_j\|^2  \\
&=
\frac12\sum_{i,j}c_{ij}(\|B_i\|^2+\|B_j\|^2-2\langle B_i,B_j\rangle)\\
& =\frac12\left(\sum_{i}\|B_i\|^2\sum_{j}c_{ij}+\sum_{j}\|B_j\|^2 \sum_{i}c_{ij}\right)-\sum_{i,j}c_{ij}\langle B_i, B_j\rangle\\
&=-\sum_{i,j}c_{ij}\langle B_i, B_j\rangle\\
&=-\left\|\sum_{i=1}^n p_i\otimes B_i\right\|^2
\le0.
\end{aligned}
\]
Therefore, we get
\[
\sum_{i<j}\langle y_i'-y_j',d_{ij}\rangle^2
\le
\frac12\sum_{i<j}\|B_i-B_j\|^2.
\]
By Lemma~\ref{lem:pairwise-distance}, we have
\[
\frac12\sum_{i<j}\|B_i-B_j\|^2
=
\frac n2\sum_{i=1}^n\|B_i\|^2
-
\frac12\left\|\sum_{i=1}^n B_i\right\|^2
\le
\frac n2\sum_{i=1}^n\|B_i\|^2.
\]
Using \(\|B_i\|=\|y_i'\|\), we obtain
$
\sum_{i<j}\langle y_i'-y_j',d_{ij}\rangle^2
\le
\frac n2\sum_{i=1}^n\|y_i'\|^2.
$
Since \(\sum_i y_i'=0\), Lemma~\ref{lem:pairwise-distance} gives
$
\frac12\sum_{i<j}\|y_i'-y_j'\|^2
=
\frac n2\sum_{i=1}^n\|y_i'\|^2.
$
Thus, we obtain
\[
\sum_{i<j}\langle y_i'-y_j',d_{ij}\rangle^2
\le
\frac12\sum_{i<j}\|y_i'-y_j'\|^2,
\]
which means \(D(\mathbf y')\ge0\). Since \(D(\mathbf y')=D(\mathbf y)\), we also have
\(D(\mathbf y)\ge0\). This proves the lemma.
\end{proof} 

Next, we give the key lemma on estimating the quadratic form $\mathbf{x}^TL\mathbf{x}$.

\begin{lemma}\label{lem:key-estimate}
For every $\mathbf{x}=(x_1,\ldots,x_n)^T\in(\mathbb R^d)^n$,
we have
\[
\mathbf{x}^TL\mathbf{x}
\le
\frac n2\sum_{i=1}^n\|x_i\|^2
+
\frac12
\left(
\sum_{i=1}^n\langle x_i,p_i\rangle
\right)^2
-\frac12
\left\|
\sum_{i=1}^n x_i
\right\|^2 .
\]
\end{lemma}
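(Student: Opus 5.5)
Decompose each $x_i$ into a radial and a tangential part: write $x_i=s_ip_i+y_i$ with $s_i:=\langle x_i,p_i\rangle$ and $\langle y_i,p_i\rangle=0$. Set $\mathbf s p:=(s_1p_1,\ldots,s_np_n)^T$. The plan is to expand $\mathbf{x}^TL\mathbf{x}=\sum_{i<j}\langle x_i-x_j,d_{ij}\rangle^2$ via Lemma~\ref{lem:basic-formulas} into three pieces: a radial piece, a tangential piece, and a cross term. The tangential piece will be controlled by Lemma~\ref{lem:tangent-estimate}, and the other two will be computed exactly using the identities of Lemma~\ref{lem:centered-identities}.

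\textbf{Radial piece.} For $p_i\ne p_j$ we have
\[
\langle s_ip_i-s_jp_j,p_i-p_j\rangle=(s_i+s_j)(1-c_{ij}),
\]
hence
\[
\langle s_ip_i-s_jp_j,d_{ij}\rangle^2=\tfrac12(s_i+s_j)^2(1-c_{ij}).
\]
This also holds trivially when $p_i=p_j$, since then $c_{ij}=1$. Summing and using $\sum_{j\ne i}(1-c_{ij})=n$ and $\sum_{j}c_{ij}=0$ should give a closed form in terms of $\sum s_i^2$, $(\sum s_i)^2$ and $\sum_{i,j}c_{ij}s_is_j=\|\sum_i s_ip_i\|^2$.

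\textbf{Cross term.} We have
\[
\langle y_i-y_j,p_i-p_j\rangle=-\langle y_i,p_j\rangle-\langle y_j,p_i\rangle,
\]
so the cross term is
\[
2\sum_{i<j}\tfrac12(s_i+s_j)\bigl(-\langle y_i,p_j\rangle-\langle y_j,p_i\rangle\bigr).
\]
Splitting it, one part is $-\sum_i s_i\sum_{j\ne i}\langle y_i,p_j\rangle$, which vanishes because $\sum_{j\ne i}p_j=-p_i\perp y_i$. The remaining part is $-\sum_{i\ne j}s_j\langle y_i,p_j\rangle=-\langle\sum_i y_i,\sum_j s_jp_j\rangle$, using $\langle y_j,p_j\rangle=0$ to add diagonal terms.

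\textbf{Tangential piece.} Lemma~\ref{lem:tangent-estimate} together with Lemma~\ref{lem:pairwise-distance} bounds it by
\[
\frac n2\sum\|y_i\|^2-\frac12\Bigl\|\sum y_i\Bigr\|^2.
\]

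\textbf{Assembly.} Next compute the right-hand side of the claimed inequality in the same variables. Since $\sum_i\|x_i\|^2=\sum s_i^2+\sum\|y_i\|^2$ and $\sum_i\langle x_i,p_i\rangle=\sum s_i$, and writing $P:=\sum s_ip_i$ and $Y:=\sum y_i$, we have
\[
\Bigl\|\sum x_i\Bigr\|^2=\|P\|^2+2\langle P,Y\rangle+\|Y\|^2.
\]
Subtracting, the $\|Y\|^2$ terms and the $\langle P,Y\rangle$ cross terms should cancel exactly against the cross term above. What remains is to check that the radial piece equals
\[
\frac n2\sum s_i^2+\frac12\Bigl(\sum s_i\Bigr)^2-\frac12\|P\|^2.
\]
Expanding $\frac12\sum_{i<j}(s_i+s_j)^2(1-c_{ij})$ gives $\frac12\sum_i s_i^2\sum_{j\ne i}(1-c_{ij})+\sum_{i<j}s_is_j(1-c_{ij})$. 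Here the first sum equals $n$ times $\frac12\sum s_i^2$, and the second equals $\frac12[(\sum s_i)^2-\sum s_i^2]-\frac12[\|P\|^2-\sum s_i^2]$. This matches, so in fact equality holds in the radial-plus-cross part.

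\textbf{Main obstacle.} Essentially all the inequality lives in Lemma~\ref{lem:tangent-estimate}, which is already available. The delicate step is the bookkeeping of the cross term: verifying that it equals exactly $-\langle Y,P\rangle$, with the correct sign and factor, so that it cancels the $-\langle P,Y\rangle$ coming from $-\frac12\|\sum x_i\|^2$. I would double-check the degenerate pairs with $p_i=p_j$, where $d_{ij}=0$. In that case every formula used contains the factor $(1-c_{ij})=0$ or the difference $p_i-p_j=0$, so the pairwise identities remain valid.
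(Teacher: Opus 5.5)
Your proposal is correct and follows the paper's proof essentially step for step. You use the same splitting $x_i=s_ip_i+y_i$, and your radial piece, cross term and tangential piece are exactly the paper's $T_1$, $-T_2$ and $T_3$: the cross term evaluates to $-\langle \sum_i y_i,\sum_i s_ip_i\rangle$ via $\sum_{j\ne i}p_j=-p_i$, the tangential piece is bounded by Lemma~\ref{lem:tangent-estimate} together with Lemma~\ref{lem:pairwise-distance}, and degenerate pairs $p_i=p_j$ are handled in the same way as in the paper.
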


\begin{proof}
For each \(i\in[n]\), set
$
a_i:=\langle x_i,p_i\rangle\ \text{and}\ 
y_i:=x_i-a_ip_i.$
Then $
x_i=a_ip_i+y_i,\ 
\langle y_i,p_i\rangle=0,\ \text{and}\ 
\|x_i\|^2=a_i^2+\|y_i\|^2.
$
For \(1\le i<j\le n\),  put $
c_{ij}:=\langle p_i,p_j\rangle\ \text{and}\ 
z_{ij}:=\langle y_i,p_j\rangle+\langle y_j,p_i\rangle .$
Since $\|p_i-p_j\|^2=2(1-c_{ij})$,
we have, for \(p_i\ne p_j\),
\[
\begin{aligned}
\langle x_i-x_j,d_{ij}\rangle^2
&=
\frac{
\left(
(1-c_{ij})(a_i+a_j)-z_{ij}
\right)^2
}
{2(1-c_{ij})}  \\
&=
\frac12(1-c_{ij})(a_i+a_j)^2
-(a_i+a_j)z_{ij}
+
\langle y_i-y_j,d_{ij}\rangle^2 .
\end{aligned}
\]
If \(p_i=p_j\), then \(d_{ij}=0\), \(c_{ij}=1\) and
\(z_{ij}=0\), so the same identity remains valid. Hence, by
Lemma~\ref{lem:basic-formulas}, we get 
\[
\mathbf{x}^TL\mathbf{x}=T_1-T_2+T_3,
\]
where
\[
T_1:=
\frac12\sum_{i<j}(1-c_{ij})(a_i+a_j)^2,
\]
\[
T_2:=
\sum_{i<j}(a_i+a_j)z_{ij},
\]
and
\[
T_3:=
\sum_{i<j}\langle y_i-y_j,d_{ij}\rangle^2 .
\]

We estimate these three terms separately. First, expanding \(T_1\) and
using Lemma~\ref{lem:centered-identities}, we obtain
\[
\begin{aligned}
T_1
&=
\frac12\sum_{i=1}^n a_i^2
\sum_{j\ne i}(1-c_{ij})
+
\sum_{i<j}(1-c_{ij})a_ia_j  \\
&=
\frac n2\sum_{i=1}^n a_i^2
+
\sum_{i<j}(1-c_{ij})a_ia_j .
\end{aligned}
\]
Moreover,
\[
\sum_{i<j}(1-c_{ij})a_ia_j
=
\frac12\left(\sum_{i=1}^n a_i\right)^2
-
\frac12\left\|\sum_{i=1}^n a_ip_i\right\|^2.
\]
Therefore,
\[
T_1
=
\frac n2\sum_{i=1}^n a_i^2
+
\frac12\left(\sum_{i=1}^n a_i\right)^2
-
\frac12\left\|\sum_{i=1}^n a_ip_i\right\|^2.
\]

Second, using $
\sum_{j\ne i}\langle y_i,p_j\rangle
=
\left\langle y_i,-p_i\right\rangle
=
0$, 
we get
\[
T_2 =
\sum_{i=1}^n a_i\sum_{j\ne i}z_{ij}=\sum_{i=1}^{n}a_i\sum_{j\ne i}\left(\langle y_i,p_j\rangle+\langle y_j,p_i\rangle     \right)=\sum_{i=1}^{n}a_i\sum_{j=1}^n \langle y_j,p_i\rangle =
\left\langle
\sum_{j=1}^n y_j,
\sum_{i=1}^n a_ip_i
\right\rangle .
\]

Third, by Lemma~\ref{lem:tangent-estimate} and
Lemma~\ref{lem:pairwise-distance},
\[
T_3\le
\frac12\sum_{i<j}\|y_i-y_j\|^2 =
\frac n2\sum_{i=1}^n\|y_i\|^2
-
\frac12\left\|\sum_{i=1}^n y_i\right\|^2.
\]

Combining the estimates for \(T_1,T_2,T_3\), we obtain
\[
\begin{aligned}
\mathbf{x}^TL\mathbf{x}
&=
T_1-T_2+T_3  \\
&\le
\frac n2\sum_{i=1}^n a_i^2
+
\frac12\left(\sum_{i=1}^n a_i\right)^2
-
\frac12\left\|\sum_{i=1}^n a_ip_i\right\|^2  \\
&\quad
-
\left\langle
\sum_{j=1}^n y_j,
\sum_{i=1}^n a_ip_i
\right\rangle
+
\frac n2\sum_{i=1}^n\|y_i\|^2
-
\frac12\left\|\sum_{i=1}^n y_i\right\|^2  \\
&=
\frac n2\sum_{i=1}^n(a_i^2+\|y_i\|^2)
+
\frac12\left(\sum_{i=1}^n a_i\right)^2 
-
\frac12
\left\|
\sum_{i=1}^n a_ip_i+\sum_{i=1}^n y_i
\right\|^2  
.\end{aligned}
\]
Since $
a_i^2+\|y_i\|^2=\|x_i\|^2$, $ x_i=a_ip_i+y_i$ and $
a_i=\langle x_i,p_i\rangle$, 
the desired inequality follows.
\end{proof}

Finally, we present the proof of Theorem~\ref{thm1-1}.
\begin{proof}[Proof of Theorem~\ref{thm1-1}]
Let
\[
\mathbf p:=(p_1,\ldots,p_n)^T\in(\mathbb R^d)^n.
\]
Since the image of \(p\) contains at least three distinct points, \(p\)
is non-constant. Hence Lemma~\ref{lem1-1} implies that the largest
eigenvalue of \(L=L(K_n,p)\) is \(n\), with eigenvector \(\mathbf p\).

Let \(\mathbf x=(x_1,\ldots,x_n)^T\in(\mathbb R^d)^n\) be orthogonal to
\(\mathbf p\). Then $
\sum_{i=1}^n\langle x_i,p_i\rangle
=
\langle \mathbf x,\mathbf p\rangle
=
0.$
By Lemma~\ref{lem:key-estimate}, we get
\[
\mathbf x^TL\mathbf x
\le
\frac n2\sum_{i=1}^n\|x_i\|^2
=
\frac n2\|\mathbf x\|^2 .
\]
Thus, by Lemma~\ref{lem:spectral-criterion}, every eigenvalue of \(L\)
other than \(n\) is at most \(n/2\).

Finally, Lemma~\ref{lem1-2} shows that \(n/2\) is an eigenvalue of
\(L(K_n,p)\). Since \(n\) is the largest eigenvalue and \(n>n/2\), the
second largest eigenvalue of \(L(K_n,p)\) is exactly \(n/2\).
\end{proof}

\section{Regular polygons and proof of Theorem~\ref{thm1-2}}

Throughout this section, we assume that \(d\ge2\), \(n\ge3\), and that
\(H\) is a two-dimensional subspace of \(\mathbb R^d\) with orthonormal
basis \(\{u,v\}\). We write
\[
p_k:=p(k)
=
\cos\frac{2(k-1)\pi}{n}\,u
+
\sin\frac{2(k-1)\pi}{n}\,v, \text{ for } 1\le k\le n.
\]
Let $
L:=L(K_n,p).$ Since \(p_i-p_j\in H\) for all \(i,j\), every direction vector \(d_{ij}\)
also lies in \(H\).

\subsection{Reduction to the planar component}

We first separate the contribution from \(H\) and its orthogonal
complement. Since $
\mathbb R^d=H\oplus H^\perp$, we have the orthogonal direct sum decomposition $
(\mathbb R^d)^n=H^n\oplus (H^\perp)^n$.

\begin{lemma}\label{lem:regular-splitting}
The subspace \(H^n\) is invariant under \(L\), and \(L\) vanishes on
\((H^\perp)^n\). In particular, $
(H^\perp)^n\subseteq \ker L$.
\end{lemma}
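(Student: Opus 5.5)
The plan is to read everything off the componentwise formula of Lemma~\ref{lem:basic-formulas},
\[
(L\mathbf x)_i=\sum_{j\ne i} d_{ij}d_{ij}^T(x_i-x_j),
\]
together with the observation already made above that every $d_{ij}$ lies in $H$. This holds because $p_i-p_j\in H$, and when $p_i=p_j$ we have $d_{ij}=0\in H$ in any case.

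For the first claim, take $\mathbf x\in(H^\perp)^n$. Then each difference $x_i-x_j$ lies in $H^\perp$. Since $d_{ij}\in H$, we get $d_{ij}^T(x_i-x_j)=\langle d_{ij},x_i-x_j\rangle=0$. Hence every summand in $(L\mathbf x)_i$ vanishes, so $L\mathbf x=0$. This gives $(H^\perp)^n\subseteq\ker L$, which is exactly the statement that $L$ vanishes on $(H^\perp)^n$.

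For invariance of $H^n$, take $\mathbf x\in H^n$. Each summand $d_{ij}d_{ij}^T(x_i-x_j)=\langle d_{ij},x_i-x_j\rangle\, d_{ij}$ is a scalar multiple of $d_{ij}\in H$. Therefore $(L\mathbf x)_i\in H$ for every $i$, and so $L\mathbf x\in H^n$.

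There is an alternative route: because $L$ is symmetric and $(H^\perp)^n$ is the orthogonal complement of $H^n$ in $(\mathbb R^d)^n$, invariance of $H^n$ already follows from invariance of $(H^\perp)^n$. The direct computation is just as short, so I would give both checks directly.

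I do not expect any real obstacle. The only point needing care is the degenerate case $p_i=p_j$, where $d_{ij}=0$, and this case is harmless. For the regular polygon with $n\ge3$ the points are in fact distinct, but the argument does not use that.
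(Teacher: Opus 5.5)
Your proof is correct and is the paper's argument: both use the componentwise formula from Lemma~\ref{lem:basic-formulas} together with $d_{ij}\in H$ (including the degenerate case $d_{ij}=0$), showing that each summand vanishes on $(H^\perp)^n$ and lies in $H$ on $H^n$. Your symmetry-based alternative for the invariance of $H^n$ is also valid, and your observation that each summand is a multiple of $d_{ij}$ in fact shows the slightly stronger statement that the whole range of $L$ lies in $H^n$.
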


\begin{proof}
Since \(p_i\in H\) for all \(i\), we have $
p_i-p_j\in H $ for all \(i,j\). Hence each direction vector \(d_{ij}\) belongs to \(H\)
as well, including the case \(d_{ij}=0\).
Using the componentwise formula for the stiffness matrix, for \(\mathbf x\in (\mathbb R^d)^n\), we have
\[
(L\mathbf x)_i
=
\sum_{j\ne i}
d_{ij}d_{ij}^T(x_i-x_j),
\qquad i=1,\ldots,n .
\]
We first prove that \(H^n\) is invariant under \(L\). Let $\mathbf h=(h_1,\ldots,h_n)^T\in H^n$. 
Then \(h_i-h_j\in H\) for all \(i,j\). Since \(d_{ij}\in H\), we have
\[
d_{ij}d_{ij}^T(h_i-h_j)
=
\langle h_i-h_j,d_{ij}\rangle d_{ij}
\in H .
\]
Therefore each component \((L\mathbf h)_i\) belongs to \(H\), and hence $L\mathbf h\in H^n$.
Thus \(H^n\) is invariant under \(L\).
We next show that \(L\) vanishes on \((H^\perp)^n\). Let $
\mathbf x=(x_1,\ldots,x_n)^T\in (H^\perp)^n$. 
Then \(x_i-x_j\in H^\perp\) for all \(i,j\), while \(d_{ij}\in H\). Hence
\[
d_{ij}^T(x_i-x_j)
=
\langle x_i-x_j,d_{ij}\rangle
=
0 .
\]
Consequently, $
d_{ij}d_{ij}^T(x_i-x_j)=0 $
for every \(i\ne j\). It follows from the componentwise formula that $
L\mathbf x=0$.
Therefore \(L\) vanishes on \((H^\perp)^n\), and in particular $
(H^\perp)^n\subseteq \ker L$.
\end{proof}
Consequently, the spectrum of \(L\) is obtained from the spectrum of
the restriction $
L_H:=L|_{H^n}:H^n\to H^n$ 
together with the additional zero eigenvalue contribution coming from
\((H^\perp)^n\), whose dimension is \(n(d-2)\). It remains to determine
the spectrum of \(L_H\).

\subsection{Complex representation of the planar component}

We identify the real plane \(H\) with the complex plane by the isometric
isomorphism
\[
\Phi:H\to\mathbb C,\ \text{and}\ 
\Phi(au+bv)=a+b\mathrm{i},
\qquad a,b\in\mathbb R.
\]
Under this identification, $
q_k:=\Phi(p_k)
=
e^{2(k-1)\pi\mathrm{i}/n},\  k=1,\ldots,n$. Moreover, $
\sum_{k=1}^n q_k=0$.
For \(i\ne j\), let
\[
P_{ij}:H\to H,\ 
\text{and}\ 
P_{ij}(x)=d_{ij}d_{ij}^T x
=
\langle x,d_{ij}\rangle d_{ij},
\]
be the orthogonal projection onto the line spanned by \(d_{ij}\).
Let \(P_{ij}'\) denote the corresponding real-linear operator on
\(\mathbb C\), namely $
P_{ij}'=\Phi\circ P_{ij}\circ \Phi^{-1}$.

\begin{lemma}\label{lem:projection-complex}
 For every \(z\in\mathbb C\) and every \(i\ne j\), we have \[ P_{ij}'(z) = \frac12\left(z-q_iq_j\overline z\right). \] 
 \end{lemma}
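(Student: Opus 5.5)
The plan is to transport the projection to $\mathbb C$ and express it through the unit complex number giving the direction of $q_i-q_j$. For any unit vector $\omega\in\mathbb C$, the real inner product on $\mathbb C\cong\mathbb R^2$ is $\langle z,w\rangle=\operatorname{Re}(z\overline w)$. Hence the orthogonal projection of $z$ onto $\mathbb R\omega$ is
\[
\operatorname{Re}(z\overline\omega)\,\omega=\tfrac12\bigl(z\overline\omega+\overline z\omega\bigr)\omega=\tfrac12\bigl(z+\omega^2\overline z\bigr),
\]
where we used $|\omega|=1$. Since $\Phi$ is an isometric real-linear isomorphism, $P_{ij}'$ is exactly the orthogonal projection onto $\mathbb R\,\Phi(d_{ij})$. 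It therefore suffices to apply this with $\omega=\Phi(d_{ij})=(q_i-q_j)/|q_i-q_j|$, which is legitimate because $i\ne j$ gives $q_i\ne q_j$ for the regular $n$-gon. In particular $d_{ij}\neq0$.

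The key step is to compute $\omega^2=(q_i-q_j)^2/|q_i-q_j|^2$. Write $|q_i-q_j|^2=(q_i-q_j)(\overline{q_i}-\overline{q_j})$ and use $\overline{q_k}=1/q_k$, which holds since $|q_k|=1$. This gives
\[
\overline{q_i}-\overline{q_j}=\frac{1}{q_i}-\frac{1}{q_j}=\frac{q_j-q_i}{q_iq_j}=-\frac{q_i-q_j}{q_iq_j}.
\]
Consequently $|q_i-q_j|^2=-(q_i-q_j)^2/(q_iq_j)$, and so $\omega^2=-q_iq_j$. Substituting into the projection formula yields $P_{ij}'(z)=\tfrac12(z-q_iq_j\overline z)$, as claimed.

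There is no real obstacle here; the only points needing care are the sign in $\omega^2$ and the observation that $P_{ij}'$ depends on $d_{ij}$ only through $\omega^2$, so the sign ambiguity of $d_{ij}$ is irrelevant. As a sanity check, I would verify the formula on $z=\omega$, where it returns $\omega$, and on $z=\mathrm i\omega$, where it returns $0$.
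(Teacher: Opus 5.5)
Your proof is correct and follows the paper's route. Both arguments write the projection onto the unit direction $w=\Phi(d_{ij})$ as $\operatorname{Re}(z\overline{w})\,w=\frac12(z+w^2\overline{z})$ and then show $w^2=-q_iq_j$. The only difference is how $w^2$ is computed: the paper uses the half-angle factorization $q_i-q_j=2\mathrm{i}\,e^{\mathrm{i}(\theta_i+\theta_j)/2}\sin\frac{\theta_i-\theta_j}{2}$, whereas you use $\overline{q_k}=1/q_k$ to get $|q_i-q_j|^2=-(q_i-q_j)^2/(q_iq_j)$. Your computation is equally valid, and like the paper's it only needs $|q_i|=|q_j|=1$ and $q_i\neq q_j$.
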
 
\begin{proof} 
Let \[ w:=\Phi(d_{ij}) = \frac{q_i-q_j}{|q_i-q_j|}. \] Then \(|w|=1\), and 
\[ P_{ij}'(z) = \Phi\circ P_{ij}\circ \Phi^{-1}(z)=\langle \Phi^{-1}(z),d_{ij}\rangle\Phi (d_{ij})=\langle z,w\rangle_{\mathbb R}w, \]
 where $ \langle z,w\rangle_{\mathbb R} =\operatorname{Re}(z\overline w)$ is the standard real inner product on \(\mathbb C\). Hence \[ P_{ij}'(z) = \operatorname{Re}(z\overline w)w = \frac12(z\overline w+\overline z\,w)w = \frac12(z+\overline z\,w^2). \] It remains to compute \(w^2\). 
 Since $ q_i=e^{\mathrm{i}\theta_i},\ q_j=e^{\mathrm{i}\theta_j}$, 
 where $ \theta_k=\frac{2(k-1)\pi}{n}$, 
 we have \[ q_i-q_j = 2\mathrm{i}\, e^{\mathrm{i}(\theta_i+\theta_j)/2} \sin\frac{\theta_i-\theta_j}{2}. \] 
 Therefore \[ w^2 = \frac{(q_i-q_j)^2}{|q_i-q_j|^2} = -q_iq_j. \] 
 Substituting this into the expression for \(P_{ij}'(z)\), we obtain \[ P_{ij}'(z) = \frac12\left(z-q_iq_j\overline z\right). \]
\end{proof}

Let \(\Phi^n: H^n \to \mathbb C^n\) be the componentwise extension of \(\Phi\), defined by $
\Phi^n\left((x_1,\dots,x_n)\right)=\left(\Phi(x_1),\dots,\Phi(x_n)\right)$.
We define the real-linear
operator $ \mathcal L:\mathbb C^n\to\mathbb C^n$ by \[\mathcal L=\Phi^n\circ L_H\circ(\Phi^n)^{-1}.\]
Thus \(\mathcal L\) is the complex-coordinate representation of \(L_H\).
For \(x=(x_1,\ldots,x_n)^T\in\mathbb C^n\), set $
S(x):=\sum_{k=1}^n x_k$ and $
T(x):=\sum_{k=1}^n q_k\overline{x_k}$.

\begin{lemma}\label{lem:regular-L-formula}
For every \(x=(x_1,\ldots,x_n)^T\in\mathbb C^n\), we have
\[
(\mathcal Lx)_i
=
\frac n2 x_i
-
\frac12 S(x)
+
\frac12 q_iT(x),
\qquad i=1,\ldots,n.
\]
\end{lemma}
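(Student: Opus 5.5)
We transport the componentwise formula of Lemma~\ref{lem:basic-formulas} to complex coordinates and then apply Lemma~\ref{lem:projection-complex} term by term. For $\mathbf h\in H^n$ with $x=\Phi^n(\mathbf h)$, the formula gives
\[
(L_H\mathbf h)_i=\sum_{j\ne i}P_{ij}(h_i-h_j).
\]
Since $\Phi$ is real-linear, this becomes
\[
(\mathcal Lx)_i=\sum_{j\ne i}P'_{ij}(x_i-x_j).
\]
Lemma~\ref{lem:projection-complex} applies because the $q_k$ are distinct, so $p_i\neq p_j$ for $i\neq j$ and every $d_{ij}$ is a unit vector. Substituting it yields
\[
(\mathcal Lx)_i=\frac12\sum_{j\ne i}\Bigl((x_i-x_j)-q_iq_j\bigl(\overline{x_i}-\overline{x_j}\bigr)\Bigr).
\]

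We then evaluate the four resulting sums separately, using only $\sum_k q_k=0$ and $|q_i|=1$:
\begin{itemize}
\item $\sum_{j\ne i}x_i=(n-1)x_i$;
\item $\sum_{j\ne i}x_j=S(x)-x_i$;
\item $\sum_{j\ne i}q_iq_j\overline{x_i}=q_i\overline{x_i}\sum_{j\ne i}q_j=q_i\overline{x_i}\,(-q_i)=-q_i^2\overline{x_i}$;
\item $\sum_{j\ne i}q_iq_j\overline{x_j}=q_i\bigl(T(x)-q_i\overline{x_i}\bigr)=q_iT(x)-q_i^2\overline{x_i}$.
\end{itemize}
The first two sums combine to $n x_i-S(x)$. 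For the conjugate part, the bracket $-\sum_{j\ne i}q_iq_j(\overline{x_i}-\overline{x_j})$ equals
\[
q_i^2\overline{x_i}+q_iT(x)-q_i^2\overline{x_i}=q_iT(x),
\]
because the two $q_i^2\overline{x_i}$ terms cancel. Multiplying by $\tfrac12$ gives exactly $\tfrac n2x_i-\tfrac12S(x)+\tfrac12q_iT(x)$, as claimed.

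The main point requiring care is this cancellation in the conjugate part, where the signs are easy to get wrong. I would also confirm that identifying $\mathcal L$ with $\Phi^n\circ L_H\circ(\Phi^n)^{-1}$ commutes with the componentwise sums. This holds because $\Phi$ is a real-linear isometry and the operators $P'_{ij}$ are defined by conjugation with $\Phi$.
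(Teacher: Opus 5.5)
Your proposal is correct and matches the paper's proof essentially step for step. You pass to complex coordinates via $(\mathcal Lx)_i=\sum_{j\ne i}P'_{ij}(x_i-x_j)$, substitute Lemma~\ref{lem:projection-complex}, and use $\sum_k q_k=0$ so that the two $q_i^2\overline{x_i}$ terms cancel, collapsing the conjugate part to $q_iT(x)$; your explicit check that the $q_k$ are distinct, so that Lemma~\ref{lem:projection-complex} applies, is a detail the paper leaves implicit.
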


\begin{proof}
By the definition of \(L\), let \(\mathbf{y}\in (\mathbb{R}^{d})^n\). Then for each \(i\),
$(L\mathbf y)_i
=
\sum_{j\ne i}
d_{ij}d_{ij}^T(y_i-y_j).
$
After applying \(\Phi\) componentwise, we obtain
\[
(\mathcal Lx)_i
=
\sum_{j\ne i}P_{ij}'(x_i-x_j).
\]
Using Lemma~\ref{lem:projection-complex}, this becomes
\[
(\mathcal Lx)_i
=
\frac12\sum_{j\ne i}(x_i-x_j)
-
\frac12\sum_{j\ne i}q_iq_j\overline{x_i-x_j}.
\]

The first sum is
\[
\frac12\sum_{j\ne i}(x_i-x_j)
=
\frac n2x_i-\frac12S(x).
\]
For the second sum, we compute
\[
\begin{aligned}
\sum_{j\ne i}q_iq_j\overline{x_i-x_j}
&=
q_i
\left(
\overline{x_i}\sum_{j\ne i}q_j
-
\sum_{j\ne i}q_j\overline{x_j}
\right).
\end{aligned}
\]
Since $
\sum_{j=1}^n q_j=0$, 
we have $
\sum_{j\ne i}q_j=-q_i$ and $
\sum_{j\ne i}q_j\overline{x_j}
= T(x)-q_i\overline{x_i}.$
Therefore
\[
\sum_{j\ne i}q_iq_j\overline{x_i-x_j}=
q_i
\left(
-q_i\overline{x_i}
-
T(x)
+
q_i\overline{x_i}
\right) =
-q_iT(x).
\]
Substituting this into the preceding expression gives
\[
(\mathcal Lx)_i
=
\frac n2x_i-\frac12S(x)+\frac12q_iT(x),
\]
as claimed.
\end{proof}

\subsection{Eigenspace decomposition}

We now regard \(\mathbb C^n\) as a real vector space of dimension \(2n\). Let $
\mathbf 1:=(1,\ldots,1)^T$ and $
\mathbf q:=(q_1,\ldots,q_n)^T$.
Define the following real subspaces of \(\mathbb C^n\): \[
W_1:=\{c\mathbf 1\mid c\in\mathbb{C}\}, \qquad W_2:=\{r\mathbf q \mid r\in\mathbb R\}, \qquad 
W_3:=\{r\mathrm{i}\mathbf q\mid r\in\mathbb R\},\]
and
\[
W_4:=\left\{x=(x_1,\ldots,x_n)^\top\in\mathbb{C}^n \;\middle|\; \sum_{i=1}^{n}x_i=0,\; \sum_{i=1}^{n}q(i)\bar{x}_i=0\right\}.
\]

\begin{lemma}\label{lem:regular-decomposition}
As a real vector space, we have
\[
\mathbb C^n
=
W_1\oplus W_2\oplus W_3\oplus W_4.
\]
Moreover, their dimensions are given by
\[
\dim_{\mathbb R}W_1=2,\qquad
\dim_{\mathbb R}W_2=1,\qquad
\dim_{\mathbb R}W_3=1,\ \text{and} \ \dim_{\mathbb R}W_4=2n-4.
\]
\end{lemma}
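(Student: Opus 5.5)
We prove the decomposition by treating $\mathbb C^n\cong\mathbb R^{2n}$ with the real inner product $\langle x,y\rangle_{\mathbb R}=\operatorname{Re}\sum_k x_k\overline{y_k}$, which is the inner product induced from $H^n$ via $\Phi^n$. We show that $W_1,W_2,W_3$ are pairwise orthogonal and of the stated dimensions, and that $W_4$ is exactly their orthogonal complement. This gives the direct sum, and the dimension of $W_4$ is then $2n-2-1-1=2n-4$.

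\emph{Dimensions of $W_1,W_2,W_3$.} $W_1$ is the real span of $\mathbf 1$ and $\mathrm i\mathbf 1$, which are $\mathbb R$-independent, so $\dim_{\mathbb R}W_1=2$. Since $\mathbf q\ne0$, both $W_2$ and $W_3$ are lines.

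\emph{Pairwise orthogonality.} We have
\[
\langle c\mathbf 1,\mathbf q\rangle_{\mathbb R}=\operatorname{Re}\Bigl(c\sum_k\overline{q_k}\Bigr)=0
\qquad\text{and}\qquad
\langle c\mathbf 1,\mathrm i\mathbf q\rangle_{\mathbb R}=\operatorname{Re}\Bigl(-\mathrm i c\sum_k\overline{q_k}\Bigr)=0,
\]
both because $\sum_k q_k=0$. Also
\[
\langle \mathbf q,\mathrm i\mathbf q\rangle_{\mathbb R}=\operatorname{Re}\Bigl(\sum_k q_k\,\overline{\mathrm i q_k}\Bigr)=\operatorname{Re}(-\mathrm i n)=0.
\]
So $W_1,W_2,W_3$ are mutually orthogonal, and their sum has real dimension $4$.

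\emph{Identifying $W_4$.} For a general $x\in\mathbb C^n$, write $S=S(x)$ and $T=T(x)$. Then:
\begin{itemize}
\item $\langle x,\mathbf 1\rangle_{\mathbb R}=\operatorname{Re} S$ and $\langle x,\mathrm i\mathbf 1\rangle_{\mathbb R}=\operatorname{Im} S$, so $x\perp W_1$ if and only if $S=0$.
\item $\langle x,\mathbf q\rangle_{\mathbb R}=\operatorname{Re}\sum_k x_k\overline{q_k}=\operatorname{Re}\overline{T}=\operatorname{Re}T$.
\item $\langle x,\mathrm i\mathbf q\rangle_{\mathbb R}=\operatorname{Re}\bigl(-\mathrm i\,\overline{T}\bigr)=-\operatorname{Im}T$, up to sign.
\end{itemize}
Hence $x\perp(W_1+W_2+W_3)$ if and only if $S=T=0$, that is, $x\in W_4$. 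Therefore
\[
W_4=(W_1\oplus W_2\oplus W_3)^{\perp},
\]
and $\mathbb C^n=W_1\oplus W_2\oplus W_3\oplus W_4$ with $\dim_{\mathbb R}W_4=2n-4$.

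The only delicate step is the conjugation bookkeeping in $T$. We must check that the pairing of $x$ with $\mathbf q$ and $\mathrm i\mathbf q$ recovers the real and imaginary parts of $T=\sum_k q_k\overline{x_k}$ and not of $\sum_k q_k x_k$. It does, because $\sum_k x_k\overline{q_k}=\overline{T}$.
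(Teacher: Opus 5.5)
Your proof is correct and follows the paper's argument: both show $x\perp W_1$ iff $S(x)=0$ and $x\perp W_2\oplus W_3$ iff $T(x)=0$, which identifies $W_4$ as the real orthogonal complement of $W_1\oplus W_2\oplus W_3$. The one difference is how directness of $W_1+W_2+W_3$ is obtained: you use their mutual orthogonality (via $\sum_k q_k=0$), while the paper proves $\mathbb R$-linear independence from the distinctness of the $q_k$. Either works, and your pairing $\langle x,\mathrm i\mathbf q\rangle_{\mathbb R}$ equals exactly $-\operatorname{Im}T$, so the ``up to sign'' hedge is unnecessary.
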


\begin{proof}
First we show that \(W_1,W_2,W_3\) are linearly independent over
\(\mathbb R\). Suppose that
\[
c\mathbf 1+r\mathbf q+s\mathrm{i}\mathbf q=0,
\]
where \(c\in\mathbb C\) and \(r,s\in\mathbb R\). Then for every
\(k=1,\ldots,n\),
\[
c+(r+s\mathrm{i})q_k=0.
\]
Since the numbers \(q_1,\ldots,q_n\) are distinct, this implies $
r+s\mathrm{i}=0 $
and \(c=0\). Hence $
W_1\oplus W_2\oplus W_3$ is a direct sum, with real dimension \(2+1+1=4\).

We use the standard real inner product on \(\mathbb C^n\),
$
\langle x,y\rangle_{\mathbb R}
=
\operatorname{Re}\sum_{k=1}^n x_k\overline{y_k}.
$ For \(x\in\mathbb C^n\), the condition \(x\perp W_1\) is equivalent to $
\sum_{k=1}^n x_k=0,
$ that is, \(S(x)=0\). Similarly, since $
W_2\oplus W_3=\{\alpha\mathbf q:\alpha\in\mathbb C\}$, 
the condition \(x\perp W_2\oplus W_3\) is equivalent to $
\sum_{k=1}^n x_k\overline{q_k}=0$, i.e.  equivalent to
$\sum_{k=1}^n q_k\overline{x_k}=0$, that is, \(T(x)=0\). Therefore $
W_4=(W_1\oplus W_2\oplus W_3)^\perp $.
It follows that
\[
\mathbb C^n
=
W_1\oplus W_2\oplus W_3\oplus W_4
\]
and
\[
\dim_{\mathbb R}W_4=2n-4.
\]
\end{proof}

\begin{lemma}\label{lem:regular-eigen-decomposition}
The operator \(\mathcal L\) acts on the four subspaces above as follows:
\[
\mathcal L|_{W_1}=0,\qquad
\mathcal L|_{W_2}=n\,\mathrm{Id},\qquad
\mathcal L|_{W_3}=0,\ \text{and}\ 
\mathcal L|_{W_4}=\frac n2\,\mathrm{Id}.
\]
Consequently,
\[
\operatorname{Spec}(\mathcal L)
=
\left\{
n^{(1)},
\left(\frac n2\right)^{(2n-4)},
0^{(3)}
\right\}.
\]
\end{lemma}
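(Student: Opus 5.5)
The plan is to evaluate the formula of Lemma~\ref{lem:regular-L-formula},
\[
(\mathcal Lx)_i=\frac n2x_i-\frac12S(x)+\frac12q_iT(x),
\]
on a spanning vector of each $W_k$. Then I will read off the spectrum from the real direct-sum decomposition of Lemma~\ref{lem:regular-decomposition}. Because $\mathcal L$ is only real-linear, I will treat $c\mathbf 1$ with $c\in\mathbb C$ as a real $2$-dimensional family and compute with general $c$. Throughout I use $\sum_k q_k=0$ and $|q_k|=1$.

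For $W_1$, take $x=c\mathbf 1$. Then $S(x)=nc$ and $T(x)=\overline c\sum_k q_k=0$, so $(\mathcal Lx)_i=\frac n2c-\frac n2c=0$.

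For $x=\alpha\mathbf q$ with $\alpha\in\mathbb C$, we have $S(x)=\alpha\sum_kq_k=0$ and $T(x)=\overline\alpha\sum_k|q_k|^2=n\overline\alpha$. This gives
\[
(\mathcal Lx)_i=\frac n2\alpha q_i+\frac n2\overline\alpha q_i=n(\operatorname{Re}\alpha)\,q_i .
\]
Taking $\alpha=r\in\mathbb R$ yields $\mathcal Lx=nx$ on $W_2$. Taking $\alpha=r\mathrm i$ yields $\mathcal Lx=0$ on $W_3$.

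For $W_4$, both $S(x)=0$ and $T(x)=0$ by definition, so $\mathcal Lx=\frac n2x$ directly.

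Since the four subspaces give a real direct-sum decomposition of $\mathbb C^n\cong\mathbb R^{2n}$, we obtain a basis of eigenvectors of $\mathcal L$. The spectrum is therefore read off from the dimensions in Lemma~\ref{lem:regular-decomposition}:
\begin{itemize}
\item eigenvalue $n$ with multiplicity $1$, from $W_2$;
\item eigenvalue $n/2$ with multiplicity $2n-4$, from $W_4$;
\item eigenvalue $0$ with multiplicity $2+1=3$, from $W_1$ and $W_3$.
\end{itemize}
These multiplicities add up to $2n$. Algebraic and geometric multiplicities agree because $\mathcal L$ is diagonalizable: it is conjugate via the isometry $\Phi^n$ to the symmetric operator $L_H$.

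None of the steps is a genuine obstacle. The only care needed is bookkeeping of real versus complex linearity, in particular the $\overline\alpha$ that appears in $T$. This conjugate is exactly what separates $W_2$ from $W_3$. For $n\ge3$ one also needs $2n-4\ge 2>0$ so that all the listed eigenvalues actually occur, though the formula remains valid even if some multiplicity were zero.
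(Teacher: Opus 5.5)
Your proposal is correct and is essentially the paper's own proof. You evaluate the formula of Lemma~\ref{lem:regular-L-formula} on each $W_k$ and read off the multiplicities from the real direct-sum decomposition of Lemma~\ref{lem:regular-decomposition}; the only cosmetic difference is that you handle $W_2$ and $W_3$ together via $x=\alpha\mathbf q$, which gives $(\mathcal Lx)_i=n(\operatorname{Re}\alpha)q_i$.
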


\begin{proof}
We apply Lemma~\ref{lem:regular-L-formula}.

First let \(x\in W_1\). Then $
x=c\mathbf 1$
for some \(c\in\mathbb C\). Hence, we have $
S(x)=nc$ 
and $T(x)=\sum_{k=1}^n q_k\overline c
=
\overline c\sum_{k=1}^n q_k
=
0.$
Therefore, we have
\[
(\mathcal Lx)_i
=
\frac n2c-\frac12nc+0
=
0,
\]
so \(\mathcal Lx=0\). Hence, we have $
\mathcal L|_{W_1}=0$. 

Next let \(x\in W_2\). Then
$ x=r\mathbf q$ 
for some \(r\in\mathbb R\). We have $S(x)=r\sum_{k=1}^n q_k=0$
and $
T(x)
=
\sum_{k=1}^n q_k\overline{rq_k}
=
r\sum_{k=1}^n |q_k|^2
=
rn.
$
Thus, we get
\[
(\mathcal Lx)_i
=
\frac n2rq_i+\frac12q_i(rn)
=
nrq_i
=
nx_i.
\]
Hence, we get $
\mathcal L|_{W_2}=n\,\mathrm{Id}$.

Now let \(x\in W_3\). Then $
x=r\mathrm{i}\mathbf q
$ for some \(r\in\mathbb R\). Hence $S(x)=r\mathrm{i}\sum_{k=1}^n q_k=0$
and
$T(x)
=
\sum_{k=1}^n q_k\overline{r\mathrm{i}q_k}
=
-r\mathrm{i}\sum_{k=1}^n |q_k|^2
=
-r\mathrm{i}n.
$
Therefore, we get
\[
(\mathcal Lx)_i
=
\frac n2r\mathrm{i}q_i
+
\frac12q_i(-r\mathrm{i}n)
=
0.
\]
Thus, we have
$
\mathcal L|_{W_3}=0.
$

Finally, if \(x\in W_4\), then by definition $
S(x)=0$ and $T(x)=0$.
Lemma~\ref{lem:regular-L-formula} gives $
(\mathcal Lx)_i
=
\frac n2x_i$. 
Thus, we get $
\mathcal L|_{W_4}=\frac n2\,\mathrm{Id}$.

Together with Lemma~\ref{lem:regular-decomposition}, this gives a full
eigenspace decomposition of \(\mathcal L\) as a real-linear operator on
\(\mathbb C^n\). Since
\[
\dim_{\mathbb R}W_1=2,\qquad
\dim_{\mathbb R}W_2=1,\qquad
\dim_{\mathbb R}W_3=1,\ \text{and}\ 
\dim_{\mathbb R}W_4=2n-4,
\]
we obtain
\[
\operatorname{Spec}(\mathcal L)
=
\left\{
n^{(1)},
\left(\frac n2\right)^{(2n-4)},
0^{(3)}
\right\}.
\]
\end{proof}

We can now prove Theorem~\ref{thm1-2}.

\begin{proof}[Proof of Theorem~\ref{thm1-2}]
By Lemma~\ref{lem:regular-splitting}, we have the invariant orthogonal
decomposition $(\mathbb R^d)^n
=
H^n\oplus (H^\perp)^n$, 
where \(L\) vanishes on \((H^\perp)^n\). Since $
\dim (H^\perp)^n=n(d-2)$, 
the subspace \((H^\perp)^n\) contributes \(n(d-2)\) additional zero
eigenvalues.

On the planar part \(H^n\), the operator \(L_H=L|_{H^n}\) is conjugate,
via the real-linear isomorphism \(\Phi^n:H^n\to\mathbb C^n\), to the
operator \(\mathcal L\). Therefore \(L_H\) and \(\mathcal L\) have the
same spectrum. By Lemma~\ref{lem:regular-eigen-decomposition},
\[
\operatorname{Spec}(L_H)
=
\left\{
n^{(1)},
\left(\frac n2\right)^{(2n-4)},
0^{(3)}
\right\}.
\]
Combining this with the zero eigenspace \((H^\perp)^n\), we obtain
\[
\operatorname{Spec}(L(K_n,p))
=
\left\{
n^{(1)},
\left(\frac n2\right)^{(2n-4)},
0^{(n(d-2)+3)}
\right\}.
\]
This proves the theorem.
\end{proof}

\section{Concluding remarks}

In this paper we studied the spectrum of the stiffness matrix of
normalized complete frameworks. Our main result shows that, under the
assumptions
\[
\|p(i)\|=1,\qquad \sum_{i=1}^n p(i)=0,
\]
and provided that the image of \(p\) contains at least three distinct
points, the second largest eigenvalue of \(L(K_n,p)\) is always equal to
\(n/2\). Thus the eigenvalue part of Conjecture~\ref{conj1-1} holds, in
fact, for all \(d\ge2\).

On the other hand, the multiplicity of this eigenvalue behaves quite
differently. By analyzing regular \(n\)-gons embedded in a
two-dimensional subspace, we obtained the full spectrum
\[
\operatorname{Spec}(L(K_n,p))
=
\left\{
n^{(1)},
\left(\frac n2\right)^{(2n-4)},
0^{(n(d-2)+3)}
\right\}.
\]
This shows that the multiplicity of the eigenvalue \(n/2\) can be
strictly larger than \(n-1\) when \(n\ge 3\), and hence the multiplicity assertion in
Conjecture~\ref{conj1-1} is not valid in general.

The results reveal a natural dichotomy in the spectral theory of
stiffness matrices of complete frameworks: the location of the second
largest eigenvalue is universal under the normalization conditions,
whereas its multiplicity is sensitive to the geometry of the underlying
point configuration. It would be interesting to characterize those
normalized point configurations for which the multiplicity of \(n/2\) is
exactly \(n-1\), and more generally to determine all possible
multiplicities of this eigenvalue. Another natural direction is to
investigate whether analogous phenomena occur for other families of
graphs or for different normalization conditions on the framework.
{

}
\end{document}